\documentclass[11pt,twoside,reqno]{amsart}
\usepackage[utf8]{inputenc}
\usepackage[T1]{fontenc}
\usepackage{import}
\usepackage{newclude}

\usepackage{anysize}
\usepackage{amsmath,amsthm,amscd,amssymb}
\usepackage{mathtools}
\usepackage{stmaryrd}
\usepackage{wasysym}
\usepackage{mathrsfs}
\usepackage{mathdots}
\usepackage{dsfont}
\usepackage{bm}
\usepackage{scalerel,stackengine}

\usepackage{xcolor}
\usepackage{graphicx}
\usepackage{enumitem}
\usepackage{soul}
\usepackage{float}
\usepackage{makecell}
\usepackage{setspace}
\usepackage{comment}
\usepackage{csquotes}
\usepackage[all]{xy}
\usepackage{tikz}
\usepackage{tikz-cd}

\usepackage[myheadings]{fullpage}

\usepackage[pagebackref=true,hypertexnames=false]{hyperref}
\hypersetup{colorlinks=true,linkcolor=black,citecolor=black}
\renewcommand*{\backref}[1]{}
\renewcommand*{\backrefalt}[4]{({\tiny%
   \ifcase #1 Not cited.%
         \or Cited on page~#2.%
         \else Cited on pages #2.%
   \fi%
   })}
\usepackage{cleveref}

\DeclareFontFamily{U}{mathb}{}

\DeclareFontShape{U}{mathb}{m}{n}{
   <-5.5>  mathb5
   <5.5-6.5> mathb6
   <6.5-7.5> mathb7
   <7.5-8.5> mathb8
   <8.5-9.5> mathb9
   <9.5-11>  mathb10
   <11->     mathb12
}{}

\DeclareSymbolFont{mathb}{U}{mathb}{m}{n}

\DeclareMathSymbol{\lefttorightarrow}{3}{mathb}{"FC}
\DeclareMathSymbol{\righttoleftarrow}{3}{mathb}{"FD}

\makeatletter
\newcommand{\acts}{%
  \mathrel{\mathpalette\@acts\righttoleftarrow}}
\newcommand{\actedby}{%
  \mathrel{\mathpalette\@acts\lefttorightarrow}}
\newcommand{\@acts}[2]{\reflectbox{$\m@th#1#2$}}
\makeatother

\numberwithin{equation}{section}
\newcommand\mtop{.95in}
\newcommand\mbottom{.95in}
\newcommand\mleft{1in}
\newcommand\mright{1in}
\usepackage[top = \mtop, bottom = \mbottom, left = \mleft, right=\mright]{geometry}

\DeclareMathOperator{\Mat}{Mat}

\newtheorem{thm}{Theorem}[section]
\newtheorem{theorem}[thm]{Theorem}

\newtheorem{prop}[thm]{Proposition}
\newtheorem{lemma}[thm]{Lemma}
\newtheorem{conj}[thm]{Conjecture}
\newtheorem{cor}[thm]{Corollary}
\theoremstyle{definition}
\newtheorem{defi}[thm]{Definition}

\newtheorem{rmk}[thm]{Remark}
\newtheorem{remark}[thm]{Remark}

\crefname{thm}{theorem}{theorems}
\Crefname{thm}{Theorem}{Theorems}
\crefname{theorem}{theorem}{theorems}
\Crefname{theorem}{Theorem}{Theorems}
\crefname{prop}{proposition}{propositions}
\Crefname{prop}{Proposition}{Propositions}
\crefname{lemma}{lemma}{lemmas}
\Crefname{lemma}{Lemma}{Lemmas}
\crefname{conj}{conjecture}{conjectures}
\Crefname{conj}{Conjecture}{Conjectures}
\crefname{cor}{corollary}{corollaries}
\Crefname{cor}{Corollary}{Corollaries}
\crefname{defi}{definition}{definitions}
\Crefname{defi}{Definition}{Definitions}
\crefname{notation}{notation}{notations}
\Crefname{notation}{Notation}{Notations}
\crefname{rmk}{remark}{remarks}
\Crefname{rmk}{Remark}{Remarks}
\crefname{remark}{remark}{remarks}
\Crefname{remark}{Remark}{Remarks}

\newcommand\reallywidehat[1]{%
\savestack{\tmpbox}{\stretchto{%
  \scaleto{%
    \scalerel*[\widthof{\ensuremath{#1}}]{\kern-.6pt\bigwedge\kern-.6pt}%
    {\rule[-\textheight/2]{1ex}{\textheight}}
  }{\textheight}%
}{0.5ex}}%
\stackon[1pt]{#1}{\tmpbox}%
}
\DeclareSymbolFont{bbold}{U}{bbold}{m}{n}
\DeclareSymbolFontAlphabet{\mathbbold}{bbold}

\makeatletter
\def\@tocline#1#2#3#4#5#6#7{\relax
  \ifnum #1>\c@tocdepth 
  \else
    \par \addpenalty\@secpenalty\addvspace{#2}%
    \begingroup \hyphenpenalty\@M
    \@ifempty{#4}{%
      \@tempdima\csname r@tocindent\number#1\endcsname\relax
    }{%
      \@tempdima#4\relax
    }%
    \parindent\z@ \leftskip#3\relax \advance\leftskip\@tempdima\relax
    \rightskip\@pnumwidth plus4em \parfillskip-\@pnumwidth
    #5\leavevmode\hskip-\@tempdima
      \ifcase #1
       \or\or \hskip 1em \or \hskip 2em \else \hskip 3em \fi%
      #6\nobreak\relax
    \hfill\hbox to\@pnumwidth{\@tocpagenum{#7}}\par
    \nobreak
    \endgroup
  \fi}
\makeatother

\newcommand{\Z}{\mathbb{Z}}

\newcommand{\C}{\mathbb{C}}
\newcommand{\F}{\mathbb{F}}

\newcommand{\E}{\mathbb{E}}

\renewcommand{\l}{\lambda}

\renewcommand{\P}{\mathbf{P}}

\newcommand{\Y}{\mathbb{Y}}

\DeclareMathOperator{\Hom}{Hom}
\DeclareMathOperator{\Sur}{Sur}

\DeclareMathOperator{\Cok}{Cok}

\DeclareMathOperator{\Ext}{Ext}

\DeclareMathOperator{\Aut}{Aut}

\DeclareMathOperator{\Span}{span}

\DeclareMathOperator{\GL}{GL}

\title{Sharp Threshold for Universality of Rational Canonical Forms over a Finite Field}
\author{Jiahe Shen}

\date{\today}

\begin{document}

\thanks{I thank Roger Van Peski for many helpful discussions and suggestions, and Jason Fulman for helpful comments on the draft and for pointing me to relevant literature on random matrices over finite fields. This work is supported by NSF grant DMS-2246576 and Simons Investigator grant 929852.}

\maketitle

\begin{abstract}
We study the rational canonical form of sparse random matrices over a finite field. Suppose $A_n\in \Mat_n(\F_p)$ has independent and $\alpha_n$-balanced entries. We prove that if
$$
\liminf_{n\to\infty}\frac{n\alpha_n}{\log n}>1,
$$
then, for every fixed collection of distinct monic irreducible polynomials over $\F_p$, the corresponding primary partitions of $A_n$ converge jointly to the same asymptotically independent Cohen-Lenstra distributions as in the uniform model studied by Fulman in his thesis \cite{fulman1997probability}. The sharp sparsity threshold for the full rational canonical form coincides with the threshold previously obtained by Lee \cite{lee2025sharp} for finite-field cokernels and by Jung-Lee-Yu \cite{jungleeYu2026sharp} for random matrix models over $\Z_p$. 

Our proof is based on the surjection moment method over the function field $\F_p[t]$, applied to the finite module $\Cok_{\F_p[t]}(tI_n-A_n)$, whose primary decomposition records the rational canonical form. We also construct degree-$d$ critical sparse obstructions, suggesting a polynomial-dependent threshold $1/d$ for statistics associated with irreducible polynomials of minimal degree $d$.
\end{abstract}

\textbf{Keywords: }\keywords{sparse random matrices, universality, sharp threshold, surjection moments}

\textbf{Mathematics Subject Classification (2020): }\subjclass{60B20 (primary); 15B52, 20E45 (secondary)}

\tableofcontents

\section{Introduction}\label{sec: Introduction} 

\subsection{Main results}

Let $p$ be a prime number, and let $\F_p$ be the finite field of order
$p$. Denote by $\Mat_n(\F_p)$ the set of $n\times n$ matrices with
entries in $\F_p$. This paper studies the universal behavior of the
rational canonical form of a random matrix
$$
A_n\in\Mat_n(\F_p)
$$
as $n$ tends to infinity, allowing the entries to become increasingly
sparse with $n$.

Before stating our results, we recall the rational canonical form and
fix our notation. Let
\begin{equation}
    \label{eq:partition_space}
    \Y:=\left\{
    \lambda=(\lambda_1,\lambda_2,\ldots):\lambda_1\geq\lambda_2\geq\cdots\geq 0,\ 
    \lambda_i\in\mathbb Z,\ 
    \lambda_i=0
    \text{ for all but finitely many }i
    \right\}
\end{equation}
be the set of integer partitions. We enumerate the monic irreducible
nonconstant polynomials in $\F_p[t]$ by
$$
F_1,F_2,\ldots.
$$

Fix $A_n\in\Mat_n(\F_p)$. After relabeling the irreducible factors
which occur, suppose that
\begin{equation}
    \label{eq:charpoly_decomposition}
    \det(tI_n-A_n)=\prod_{i=1}^k F_i(t)^{e_i},\qquad
    \sum_{i=1}^k e_i\deg F_i=n.
\end{equation}
Then there exists $P\in\GL_n(\F_p)$ such that
\begin{equation}
    \label{eq:rational_canonical_form}
    P^{-1}A_nP
    =
    \begin{pmatrix}
        R_1 & 0 & \cdots & 0\\
        0 & R_2 & \cdots & 0\\
        \vdots & \vdots & \ddots & \vdots\\
        0 & 0 & \cdots & R_k
    \end{pmatrix}.
\end{equation}
For each $1\leq i\leq k$, there is a partition
$$
\lambda^{(F_i)}
=
\left(
\lambda_1^{(F_i)},\ldots,
\lambda_{j_i}^{(F_i)}
\right)
\in\Y,
\qquad
\sum_{\ell=1}^{j_i}\lambda_\ell^{(F_i)}=e_i,
$$
such that
\begin{equation}
    \label{eq:primary_rational_block}
    R_i
    =
    \begin{pmatrix}
        C\left(F_i^{\lambda_1^{(F_i)}}\right)
            & 0 & \cdots & 0\\
        0
            & C\left(F_i^{\lambda_2^{(F_i)}}\right)
            & \cdots & 0\\
        \vdots & \vdots & \ddots & \vdots\\
        0 & 0 & \cdots
            & C\left(F_i^{\lambda_{j_i}^{(F_i)}}\right)
    \end{pmatrix}.
\end{equation}
Here, for a monic polynomial
$$
f(t)
=
t^r+a_{r-1}t^{r-1}+\cdots+a_1t+a_0
\in\F_p[t],
$$
we write
\begin{equation}
    \label{eq:companion_matrix}
    C(f)
    :=
    \begin{pmatrix}
        0 & 1 & 0 & \cdots & 0\\
        0 & 0 & 1 & \cdots & 0\\
        \vdots & \vdots & \ddots & \ddots & \vdots\\
        0 & 0 & 0 & \cdots & 1\\
        -a_0 & -a_1 & -a_2 & \cdots & -a_{r-1}
    \end{pmatrix}
\end{equation}
for its companion matrix. The matrix in
\eqref{eq:rational_canonical_form} is uniquely determined up to the
ordering of its blocks. Thus, if $A_n$ is random, we associate to every
monic irreducible polynomial $F_i$ the random partition
$$
\lambda^{(F_i)}
=
\lambda^{(F_i)}(A_n)\in\Y,
$$
with the convention that $\lambda^{(F_i)}$ is the zero partition if
$F_i$ does not divide $\det(tI_n-A_n)$.

We now introduce the random matrix model. For $\alpha>0$, a random
variable $\xi\in\F_p$ is called $\alpha$-balanced if
$$
\P(\xi=a)\leq 1-\alpha
$$
for every $a\in\F_p$. We say that a random matrix is
$\alpha$-balanced if its entries are independent and $\alpha$-balanced. The entries are not required to be identically distributed.

Our main result shows that the asymptotic behavior of the rational canonical form remains unchanged when the uniform random matrix model studied by Fulman \cite[Section~3]{fulman1997probability} is replaced by a much sparser class of random matrices with independent entries. More precisely, universality continues to hold whenever the balance parameter stays above the sharp logarithmic threshold.

\begin{theorem}
    \label{thm:main}
    Let $(\alpha_n)_{n\geq 1}$ be a sequence of positive real numbers
    satisfying
    \begin{equation}
        \label{eq:alpha_condition}
        \liminf_{n\to\infty}
        \frac{n\alpha_n}{\log n}
        >1.
    \end{equation}
    For each $n$, let $A_n\in\Mat_n(\F_p)$ be an
    $\alpha_n$-balanced random matrix. For any tuple of distinct
    polynomials $F_{i_1},\ldots,F_{i_r}$,
    the joint distribution of
    $$
    \left(
    \lambda^{(F_{i_1})},\ldots,
    \lambda^{(F_{i_r})}
    \right)
    $$
    converges pointwise to the product measure $\prod_{j=1}^r\mu_{F_{i_j}}$
    on $\Y^r$ as $n\to\infty$, where
    \begin{equation}
        \label{eq:mu_F}
        \mu_{F_i}(\lambda)
        =
        \frac{1}{\#\Aut_{F_i}(\lambda)}
        \prod_{\ell=1}^{\infty}
        \left(1-p^{-\ell\deg F_i}\right),
        \qquad
        \lambda\in\Y.
    \end{equation}
    Here $\Aut_{F_i}(\lambda)$ denotes the group of automorphisms of
    the $\F_p[t]$-module
    $$
    \bigoplus_{j\geq 1}
    \F_p[t]/\left(F_i^{\lambda_j}\right).
    $$
\end{theorem}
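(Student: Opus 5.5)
We will run the surjection moment method over $R:=\F_p[t]$ for the finite $R$-module $M_n:=\Cok_{R}(tI_n-A_n)=R^n/(tI_n-A_n)R^n$. This module is $\F_p^n$ with $t$ acting by $A_n$, so its $F_i$-primary part is $\bigoplus_j R/(F_i^{\lambda_j^{(F_i)}})$. Fix distinct $F_{i_1},\ldots,F_{i_r}$ and put $S=\{F_{i_1},\ldots,F_{i_r}\}$. Let $\mathcal{C}_S$ be the category of finite $R$-modules supported on $S$, i.e.\ killed by a power of $F_{i_1}\cdots F_{i_r}$. For $G\in\mathcal{C}_S$ we aim to show
\begin{equation*}
\lim_{n\to\infty}\E\,\#\Sur_R(M_n,G)=1 .
\end{equation*}
This is exactly the value for the Cohen--Lenstra measure $\prod_j\mu_{F_{i_j}}$, which Fulman's uniform model realizes. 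Surjections to $G$ only see the $S$-primary part of $M_n$. The moments $1$ grow slowly enough (the $\#\Aut$ normalization gives Wood's well-behavedness condition) that the moment-determinacy theorem of Wood / Sawin--Wood applies over the product of the complete DVRs $R_{(F)}$, $F\in S$. That theorem upgrades convergence of moments to pointwise convergence of the joint law of the primary partitions, which is the statement of \Cref{thm:main}.

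\textbf{Reducing the moment to a sum over maps.} An $R$-surjection $M_n\to G$ is an $\F_p$-linear surjection $\phi:\F_p^n\to G$ with $\phi\circ A_n=t\cdot\phi$. Equivalently, the images $g_k:=\phi(e_k)$ of the standard basis generate $G$ as an $R$-module and satisfy
\begin{equation*}
\sum_{k} (A_n)_{kl}\, g_k = t g_l\qquad\text{for every column } l .
\end{equation*}
The columns of $A_n$ are independent, so
\begin{equation*}
\E\#\Sur(M_n,G)=\sum_{\phi}\prod_{l=1}^n \P\Big(\sum_k (A_n)_{kl} g_k=t g_l\Big),
\end{equation*}
where $\phi$ ranges over $R$-generating tuples $(g_1,\dots,g_n)\in G^n$. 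For a single column, the vector $\sum_k (A_n)_{kl}g_k$ is a sum of independent $\alpha_n$-balanced multiples of the $g_k$. Suppose the tuple is \emph{code-like}: for every proper $\F_p$-subspace, indeed every proper subgroup $H<G$, at least $\delta n$ of the $g_k$ lie outside $H$. Then Fourier analysis on $G$ gives
\begin{equation*}
\P(\cdot)=\#G^{-1}\big(1+O(e^{-c\alpha_n\delta n})\big)
\end{equation*}
for each column. The code-like $\phi$ then contribute $(\#G^{-n}+\text{small})\cdot\#\{\text{generating tuples}\}\to 1$, since almost all tuples in $G^n$ generate and $\#G^{-n}\cdot\#G^n=1$. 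This step follows Wood's argument and Lee's finite-field version, with $\F_p$-linear characters of $G$ replacing characters of $\Z/p^k$.

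\textbf{Non-code-like maps, the main obstacle.} We must show that the tuples in which only $m<\delta n$ coordinates lie outside some proper subgroup $H$ contribute $o(1)$. We stratify by the depth of $H$ and by $m$. Each such tuple is counted with weight at most $\#H^{n-m}\#G^{m}$. Its probability is roughly $\#H^{-n}$ for the part of the equation landing in $H$, times a penalty of at least $(1-\alpha_n)^{\text{(number of columns forced to avoid the outside coordinates)}}$ for the $G/H$ component. The delicate range is small $m$, especially $m=0$ or $1$, where the tuple is almost entirely in $H$. Since $\phi$ is surjective, some $g_k\notin H$. The relation $\phi A_n=t\phi$ then forces the columns $l$ with $tg_l\notin H$ to have a nonzero entry in one of the $m$ rows $k$ with $g_k\notin H$. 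The rows with $g_k\notin H$ must likewise be hit, which costs $(1-\alpha_n)^{n-O(m)}$ per constrained row or column. This is exactly the event of an (almost) zero row/column of $A_n$, with probability $\approx n e^{-\alpha_n n}$. It is $o(1)$ precisely when $n\alpha_n>(1+\epsilon)\log n$, and this is where the sharp constant $1$ enters.

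For these estimates we intend to follow Lee's counting for finite-field cokernels and Jung--Lee--Yu over $\Z_p$, summing over chains $H$ and over $m\le\delta n$. The binomial factor $\binom{n}{m}\#G^{m}$ has to be beaten by $e^{-\alpha_n n m(1-o(1))}$, which works because $\alpha_n n\ge(1+\epsilon)\log n$.

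We expect the main technical difficulty to be the new feature that $t$ acts nontrivially. The submodule generated by $\phi$ versus the subgroup containing most $g_k$ must be handled via $\F_p$-subspaces $H$ that need not be $t$-stable. To deal with this we will classify $H$ by whether $tH\subseteq H$. If $H$ is $t$-stable, the argument above applies verbatim. If it is not, there is a nonzero linear condition coupling $g_l$ and the column of $A_n$, which forces extra nonzero entries and gives an even stronger bound.
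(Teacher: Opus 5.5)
Your overall architecture matches the paper's. You work with the finite $\F_p[t]$-module $\Cok(tI_n-A_n)$, prove $\E\#\Sur_{\F_p[t]}(X_n,G)\to 1$, and finish with the Wood/Sawin--Wood moment theorem. Your restriction to $G$ supported on $\{F_{i_1},\dots,F_{i_r}\}$ is equivalent to the paper's passage to $X_n/QX_n$.

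The gap is in the sharp moment estimate, which is the whole analytic content of the theorem, and the route you sketch for it fails as stated. Take $\alpha_n=c_0\log n/n$ with $c_0$ slightly above $1$, a uniformly random tuple, and the extremal laws $(1-\alpha_n)\delta_0+\alpha_n\delta_1$. A nonzero single-column Fourier term then has size about $e^{-\alpha_n n}=n^{-c_0}$, and there are $n$ columns, so there is no room to lose any constant in the exponent. Your code-like step, with per-column error $O(e^{-c\alpha_n\delta n})$ and a small $\delta$ (which your non-code part needs), compounds over the $n$ independent columns to $(1+O(n^{-cc_0\delta}))^n$. This does not tend to $1$.

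The non-code part is only heuristic. Phrases like ``probability roughly $\#H^{-n}$ times a penalty'' and ``columns must have a nonzero entry in one of the $m$ rows'' implicitly assume that every entry's heavy atom is $0$. That is not without loss of generality when you estimate $\P\bigl(\sum_k A_{kl}g_k=tg_l\bigr)$ directly. The promised ``even stronger bound'' for subspaces $H$ that are not $t$-stable is never justified.

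The missing idea is that the $t$-action creates no new difficulty. Let $d=\dim_{\F_p}G$, let $\psi$ be a nontrivial additive character of $\F_p$, and let $\widehat{\mu}_{ij}$ be the Fourier transform of the law of $A_{ij}$. Fourier-expand all $n$ column equations at once:
\[
\E\#\Sur_{\F_p[t]}(X_n,G)=p^{-dn}\sum_{g}\ \sum_{h_1,\dots,h_n\in G^\vee}\psi\Bigl(-\sum_{j} h_j(tg_j)\Bigr)\prod_{i,j}\widehat{\mu}_{ij}\bigl(h_j(g_i)\bigr).
\]
Here $t$ enters only through a phase of modulus one. The outer sum may be taken over tuples spanning $G$ over $\F_p$, because for an $\F_p[t]$-generating tuple satisfying $\phi A_n=t\phi$ the $\F_p$-span is $t$-stable, hence equal to $G$.

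The zero mode gives $\prod_{a=0}^{d-1}(1-p^{a-n})\to 1$. After taking absolute values, the nonzero modes are bounded by exactly Lee's finite-field majorant $p^{-dn}\sum_{g}\sum_{h\neq 0}\prod_{i,j}|\widehat{\mu}_{ij}(h_j(g_i))|$. This majorant is convex in each $\mu_{ij}$, and $|\widehat{\mu}|$ is invariant under translating the measure. So it suffices to treat two-point laws $(1-\beta_n)\delta_0+\beta_n\delta_{v_{ij}}$ with $v_{ij}\neq 0$, where Lee's estimates give $o(1)$ uniformly. No new sparse estimate is needed: you do not have to redo Lee's counting, handle nonzero heavy atoms, or split into cases by the $t$-stability of subspaces.
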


\begin{remark}
    \label{rem:fulman}
    The corresponding results in Fulman's thesis
    \cite[Section~3]{fulman1997probability} are formulated for uniform
    random matrices in $\GL_n(\F_p)$ rather than
    $\Mat_n(\F_p)$. Consequently, the $t$-primary component is always
    trivial in his model. For every irreducible polynomial $F_i\neq t$,
    however, the limiting primary measure is the same measure
    $\mu_{F_i}$ appearing in \Cref{thm:main}.
\end{remark}

To illustrate the content of \Cref{thm:main}, first consider a single irreducible polynomial $F_i$. The theorem asserts that the random partition $\lambda^{(F_i)}$ has, in the limit $n\to\infty$, the same distribution as in the uniform matrix model. This limiting measure is a Cohen-Lenstra type distribution on $\Y$ associated with the residue field
$$
\F_p[t]/(F_i),
$$
whose cardinality is $p^{\deg F_i}$. More generally, for any fixed collection of distinct irreducible polynomials
$$
F_{i_1},\ldots,F_{i_r},
$$
the corresponding primary partitions become asymptotically independent, each with its own limiting measure $\mu_{F_{i_j}}$. In the special case $F_i=t$, the partition $\lambda^{(t)}$ records the sizes of the nilpotent Jordan blocks of $A_n$. Thus \Cref{thm:main} includes, in particular, universality of the nilpotent part of the rational canonical form. For irreducible polynomials of higher degree, the same theorem simultaneously describes the corresponding primary components of the characteristic polynomial.

The threshold in \Cref{thm:main} is sharp. Indeed, Lee
\cite{lee2025sharp} constructed an $\alpha_n$-balanced random matrix
model at the critical scale
$$
\alpha_n=\frac{\log n+O(1)}{n}
$$
for which the universal cokernel distribution fails. Since this
obstruction already occurs for the $t$-primary component, the constant
$1$ in \eqref{eq:alpha_condition} cannot be included in the present
setting either. We recall Lee's construction and explain its
interpretation in terms of rational canonical forms in
\Cref{critical:lee}.

As a direct consequence of \Cref{thm:main}, we obtain the limiting joint distribution of
the multiplicities of fixed irreducible factors in the characteristic
polynomial. For every $F_i$, let
$$
e_{F_i}=e_{F_i}(A_n)=
\left|\lambda^{(F_i)}\right|.
$$
Thus $e_{F_i}$ is the multiplicity of $F_i$ in
$\det(tI_n-A_n)$.

\begin{cor}
    \label{cor:multiplicity}
    Let $A_n,F_{i_1},\ldots,F_{i_r}$ be as in \Cref{thm:main}. Then the distribution of
    $$
    \left(
    e_{F_{i_1}},\ldots,e_{F_{i_r}}
    \right)
    $$
    converges pointwise to the product measure $\prod_{j=1}^r\nu_{F_{i_j}}$
    on $\mathbb Z_{\geq 0}^r$, where
    \begin{equation}
        \label{eq:nu_F}
        \nu_{F_i}(m)
        =
        p^{-m\deg F_i}
        \prod_{\ell\geq m+1}
        \left(1-p^{-\ell\deg F_i}\right),
        \qquad
        m\in\mathbb Z_{\geq 0}.
    \end{equation}
\end{cor}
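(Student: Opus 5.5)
The corollary follows from \Cref{thm:main} by pushing forward along the size map, plus one combinatorial identity. Write $q=p^{\deg F_i}$.

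\textbf{Step 1 (reduction to a finite sum).} For each $j$, the event $\{e_{F_{i_j}}=m_j\}$ is the event that $\lambda^{(F_{i_j})}$ lies in the finite set of partitions of $m_j$. So
$$
\P\left(e_{F_{i_1}}=m_1,\ldots,e_{F_{i_r}}=m_r\right)
$$
is a finite sum of joint probabilities of the form $\P\bigl(\lambda^{(F_{i_1})}=\lambda_1,\ldots,\lambda^{(F_{i_r})}=\lambda_r\bigr)$ with $|\lambda_j|=m_j$. By the pointwise convergence in \Cref{thm:main}, this sum converges to
$$
\prod_{j=1}^r\ \sum_{|\lambda|=m_j}\mu_{F_{i_j}}(\lambda).
$$
No tightness argument is needed, because each event involves only finitely many partitions.

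\textbf{Step 2 (the identity).} It remains to show that $\sum_{|\lambda|=m}\mu_{F_i}(\lambda)=\nu_{F_i}(m)$. The ring $\F_p[t]/(F_i^N)$ for large $N$, equivalently the completion $\F_p[t]_{(F_i)}$, is a complete DVR with residue field $\F_q$. The automorphism counts $\#\Aut_{F_i}(\lambda)$ depend only on $q$ and $\lambda$, exactly as for finite abelian $p$-groups with $p$ replaced by $q$. We therefore need the classical identity
$$
\sum_{|\lambda|=m}\frac{1}{\#\Aut_q(\lambda)}=\frac{q^{-m}}{\prod_{\ell=1}^m(1-q^{-\ell})}.
$$
Combining this with the factor $\prod_{\ell\ge1}(1-q^{-\ell})$ gives $q^{-m}\prod_{\ell\ge m+1}(1-q^{-\ell})$, which is \eqref{eq:nu_F}.

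To prove the identity, I would cite the Hall--Cohen--Lenstra formula $\sum_{|\lambda|=m}1/\#\Aut(\lambda)=\#\{\text{nilpotent }m\times m\text{ matrices over }\F_q\}/\#\GL_m(\F_q)$. This holds because orbits of nilpotent matrices under conjugation correspond to module types of size $m$, and each stabilizer is the corresponding $\Aut$. By Fine--Herstein the number of nilpotent matrices is $q^{m^2-m}$. Since $\#\GL_m(\F_q)=q^{m^2}\prod_{\ell=1}^m(1-q^{-\ell})$, the ratio is the claimed expression.

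\textbf{Expected obstacle.} The only point needing care is justifying that $\#\Aut_{F_i}(\lambda)$ agrees with the automorphism count of the $\F_q[[u]]$-module of type $\lambda$, so that the $\F_q$ formulas apply. This holds because the $F_i$-primary module is a module over $\F_p[t]/(F_i^N)\cong\F_q[u]/(u^N)$, the complete local ring with residue field $\F_q$ and uniformizer $F_i$.

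As a sanity check, summing $\nu_{F_i}(m)$ over $m$ gives $1$ by Euler's identity, so no mass escapes. This is consistent with Step 1, though not needed for the pointwise statement.
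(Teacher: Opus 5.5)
Your argument is correct and has the same two-step structure as the paper's proof. First, push the joint convergence of \Cref{thm:main} forward along $\lambda\mapsto|\lambda|$; each event is a finite union of atoms, so no tightness is needed. Second, identify the law of $|\lambda|$ under $\mu_{F_i}$.

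The difference is in how the second step is justified. The paper cites \cite[Example~5.9(ii)]{cohen2006heuristics} for $\P(|\lambda|=j)$, with $q=p^{\deg F_i}$ in place of $p$. You instead prove the needed identity $\sum_{|\lambda|=m}1/\#\Aut_q(\lambda)=q^{-m}/\prod_{\ell=1}^m(1-q^{-\ell})$ directly. You do this via orbit--stabilizer on nilpotent $m\times m$ matrices over $\F_q$ and the Fine--Herstein count $q^{m^2-m}$, after transferring through $\F_p[t]/(F_i^N)\cong\F_q[u]/(u^N)$. The citation is shorter. Your route is self-contained and makes explicit why only the residue-field size $q$ matters, a point the paper's one-line proof leaves implicit.

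The transfer you flag as the main obstacle can also be read off from \Cref{prop: decomposition and explicit expression of Aut}, whose formula for $\#\Aut_{F_i}(\lambda)$ depends only on $q$ and $\lambda$. One minor wording point: $\F_p[t]/(F_i^N)$ is an Artinian quotient of the $F_i$-adic completion of $\F_p[t]$ (which is the complete DVR). It is not itself a DVR, so your ``equivalently'' is a bit loose.
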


The case $F_i=t$ recovers the sharp finite-field cokernel theorem of
Lee \cite{lee2025sharp}, since
$$
\dim_{\F_p}\Cok(A_n)
=
\ell\bigl(\lambda^{(t)}(A_n)\bigr).
$$
Thus \Cref{thm:main} strengthens Lee's result from the rank, or
equivalently the ordinary cokernel, to the full $t$-primary partition,
while simultaneously controlling the primary components associated
with arbitrary fixed irreducible polynomials.

To the best of our knowledge, \Cref{thm:main} is the first
universality result for the full rational canonical form of a random
matrix at the sharp logarithmic sparsity scale. In particular, it
upgrades earlier universality results for statistics such as rank,
invertibility, and the occurrence of prescribed irreducible factors to
the full primary data associated with arbitrary fixed irreducible
polynomials, while retaining the sharp logarithmic sparsity scale.
Moreover, \Cref{cor:multiplicity} gives, as a direct consequence, the
corresponding sharp sparse universality for the joint multiplicities of
fixed irreducible factors of the characteristic polynomial.

\subsection{Relation to previous work and proof strategy}
\label{subsec:previous_work}

The rational canonical form of a uniform random matrix over a finite
field has been studied extensively through generating functions, cycle
indices, and symmetric functions. In particular, Fulman
\cite{fulman1997probability,fulman1999probabilistic,
fulman2002random} developed a probabilistic approach to conjugacy
classes in finite classical groups and determined, among other things,
the limiting distributions of the primary partitions. Fulman
\cite{fulman2013cohen} also emphasized the close connection between
these distributions and the Cohen-Lenstra heuristics.

Universality for coarser invariants such as rank and invertibility has
a longer history. Charlap-Rees-Robbins
\cite{charlap1990asymptotic} proved that, for matrices with i.i.d.\
entries drawn from a fixed distribution on a finite field satisfying a
natural nondegeneracy condition, the probability of invertibility
converges to the same limit as in the uniform model. In the sparse
setting, Bl\"omer-Karp-Welzl \cite{blomer1997rank} and Cooper
\cite{cooper2000distribution} studied the rank and nonsingularity of
finite-field random matrices around the logarithmic sparsity scale.
Since
$$
\operatorname{rank}(A_n)
=
n-\ell\bigl(\lambda^{(t)}(A_n)\bigr),
$$
these results may be viewed as early universality results for a coarse
statistic of the rational canonical form studied here.

Closer to the rational canonical statistics considered here,
Luh-Meehan-Nguyen \cite{luh2021some} studied several finer
characteristic-polynomial statistics for random matrices over finite
fields. In particular, they proved universality for the probability
that a random matrix is eigenvalue-free and, more generally, for the
event that its characteristic polynomial is divisible by a prescribed
irreducible polynomial. In the notation above, the latter event is
precisely
$$
\lambda^{(F)}(A_n)\neq 0.
$$
Thus their results already establish universality for whether certain
primary components are present, whereas \Cref{thm:main} determines the
full limiting primary partitions, jointly for any fixed collection of
irreducible polynomials.

A parallel line of work concerns universality for cokernels of random
matrices with nonuniform, and increasingly sparse, entries. Friedman-Washington \cite{friedman1989distribution} studied the Haar model
over the $p$-adic integers and exhibited its connection with
Cohen-Lenstra distributions. Wood \cite{wood2019random} subsequently
proved universality for random matrices with independent $\epsilon$-balanced entries. Nguyen-Wood \cite{nguyen2022random} substantially extended this framework,
allowing the entry distributions to depend on $n$ and treating sparse
random integral matrices. These works demonstrated that cokernel
universality persists far beyond the uniform model.

A central ingredient in this circle of ideas is the surjection moment
method, in which the distribution of a random finite module is studied
through the asymptotics of the numbers of surjections onto fixed finite
target modules. The general moment-determination framework was
developed further by Sawin-Wood \cite{sawin2022moment}. For the
present problem, it is natural to use a function-field version of this
method. The relevant random object is the finite $\F_p[t]$-module
\begin{equation}
    \label{eq:function_field_cokernel}
    X_n:=\Cok_{\F_p[t]}(tI_n-A_n).
\end{equation}
Its primary decomposition records exactly the rational canonical form:
for every irreducible polynomial $F_i$,
\begin{equation}
    \label{eq:primary_cokernel}
    X_n[F_i^\infty]
    \cong\bigoplus_{j\geq 1}\F_p[t]/\left(F_i^{\lambda_j^{(F_i)}}\right).
\end{equation}
Related polynomial-ring cokernel questions were studied by Cheong-Yu \cite{cheong2023distribution}, whose work provides an important
precursor to this point of view.

More recently, attention has turned to determining the optimal
sparsity threshold for cokernel universality. Lee \cite{lee2025sharp} proved that, for random matrices over $\F_p$, universality holds under
$$
\liminf_{n\to\infty}
\frac{n\alpha_n}{\log n}>1,
$$
and showed that the critical constant $1$ cannot in general be
included. Jung-Lee-Yu \cite{jungleeYu2026sharp} subsequently
established the same sharp logarithmic threshold for nonsymmetric,
symmetric, and alternating random matrix models over $\Z_p$.

Our contribution is to bring this sharp-threshold theory to the full
rational canonical form. Since $X_n$ in
\eqref{eq:function_field_cokernel} retains all of the primary
partitions simultaneously, it contains substantially finer information
than the ordinary cokernel. The main intermediate step is to prove
that, for every fixed finite $\F_p[t]$-module $G$,
\begin{equation}
    \label{eq:intro_moment_convergence}
    \E\#
    \Sur_{\F_p[t]}(X_n,G)
    \longrightarrow 1.
\end{equation}
The sharp finite-field estimates of Lee \cite{lee2025sharp} provide
the main analytic input for establishing
\eqref{eq:intro_moment_convergence}. The moment-determination method of
Sawin-Wood \cite{sawin2022moment} then yields the joint convergence
of the primary partitions in \Cref{thm:main}.

\subsection{Uniform and polynomial-dependent thresholds}
\label{subsec:degree_threshold}

\Cref{thm:main} gives a threshold which is uniform over the choice of irreducible polynomials. For a prescribed finite collection,
however, we expect the optimal threshold to depend on the degrees of
the polynomials involved. More precisely, fix distinct irreducible
polynomials
$$
F_{i_1},\ldots,F_{i_r},
$$
and set $d=\min_{1\leq j\leq r}\deg F_{i_j}$.

\begin{conj}
    \label{conj:degree_threshold}
    Let $A_n\in\Mat_n(\F_p)$ be an $\alpha_n$-balanced random matrix
    with independent entries. If
    \begin{equation}
        \label{eq:conjectural_threshold}
        \liminf_{n\to\infty}
        \frac{n\alpha_n}{\log n}>
        \frac{1}{d},
    \end{equation}
    then the joint distribution of partitions
    $$
    \left(\lambda^{(F_{i_1})}(A_n),\ldots,
    \lambda^{(F_{i_r})}(A_n)
    \right)
    $$
    converges pointwise to the same limiting distribution as in
    \Cref{thm:main}.
\end{conj}

The constant $1/d$ in \Cref{conj:degree_threshold} cannot
be improved. The obstruction is a degree-$d$ analogue of Lee's
critical example; see \Cref{critical:degree-d}. In Lee's construction,
a zero column survives when none of its $n$ entries is affected by
the sparse perturbation. For an irreducible polynomial $F$ of degree
$d$, one instead starts from a deterministic matrix containing many
copies of the companion matrix $C(F)$ and asks that the $d$ rows
supporting one such block remain completely untouched. Such an
untouched block forces an additional $F$-primary contribution to
$$
\Cok_{\F_p[t]}(tI_n-A_n).
$$
Since preserving one block requires $dn$ entries to remain unchanged,
the probability of this event is of order $(1-\alpha_n)^{dn}$. Thus
the same mechanism that produces the critical constant $1$ in Lee's
degree-one example produces the constant $1/d$ in degree $d$. At
$$
\alpha_n=\frac{\log n}{dn},
$$
an order-one number of such defects survives, while below this scale
their number grows. We give the precise construction in
\Cref{critical:degree-d}.

This obstruction also suggests a natural route toward
\Cref{conj:degree_threshold}. Fix distinct irreducible
polynomials $F_{i_1},\ldots,F_{i_r}$ and let
$d=\min_{1\leq j\leq r}\deg F_{i_j}$. One may hope to show that, under
the condition
$$
\liminf_{n\to\infty}\frac{n\alpha_n}{\log n}>\frac{1}{d},
$$
one has
$$
\lim_{n\to\infty}
\E\#\Sur_{\F_p[t]}(X_n,G)=1
$$
for every fixed finite $\F_p[t]$-module $G$ whose primary support is
contained in $\{F_{i_1},\ldots,F_{i_r}\}$. If these surjection-moment
asymptotics hold, then the moment-determination method of Sawin-Wood
\cite{sawin2022moment} immediately yields the joint weak convergence of
$$
\left(
\lambda^{(F_{i_1})}(A_n),\ldots,
\lambda^{(F_{i_r})}(A_n)
\right)
$$
to the same product of Cohen-Lenstra measures as in \Cref{thm:main}. Thus the remaining issue in \Cref{conj:degree_threshold} is essentially to establish
the corresponding polynomial-dependent surjection-moment estimate.

The smallest degree among the prescribed irreducible polynomials
therefore determines both the first possible sparse obstruction and
the expected threshold for universality. In particular, when one asks for a single threshold valid uniformly over all irreducible
polynomials, linear polynomials are present and the threshold becomes
$1$, in agreement with \Cref{thm:main}.

\subsection{Organization of the paper}

In \Cref{sec: Preliminaries}, we review the structure of finite
$\F_p[t]$-modules, including their primary decomposition and the
automorphism formulas needed later. In \Cref{sec: Proof of the main results}, we first identify the primary components of $\Cok_{\F_p[t]}(tI_n-A_n)$ with the partitions appearing in the rational canonical form. We then prove the sharp function-field
surjection-moment estimate using the finite-field Fourier bounds of
Lee \cite{lee2025sharp}, and apply the moment method of Sawin-Wood
\cite{sawin2022moment} to deduce \Cref{thm:main} and
\Cref{cor:multiplicity}. We conclude by discussing the critical sparse
constructions, including Lee's degree-one obstruction and its
degree-$d$ generalization.

\subsection{AI statement}

The mathematical ideas and results in this paper are the author's own.
AI tools were used to assist with polishing and reviewing the
manuscript. The author has independently checked the resulting arguments and takes full responsibility for the contents and correctness of the paper.

\section{Preliminaries}\label{sec: Preliminaries}

In this section, we review the basic structure of finite $\F_p[t]$-modules and record the algebraic facts needed for the proof of our main results.

\begin{defi}
Recall from \Cref{sec: Introduction} that 
$$\Y=\{\l=(\l_1,\l_2,\ldots):\l_1\ge\l_2\ge\ldots\ge0,\l_i\in\Z,\l_i=0 \text{ for all but finitely many }i\}.$$ 
The integers $\l_i>0$ are called the \emph{parts} of $\l$. Let $|\l| := \sum_{i\ge 1}\l_i,n(\lambda):=\sum_{i\ge 1} (i-1)\lambda_i$, and $m_k(\l):= \#\{i\mid \l_i = k\}$. 
\end{defi}

Now we can classify the isomorphism classes of finite $\F_p[t]$-modules. Following \Cref{sec: Introduction}, we write $S\subset\F_p[t]$ for the subset of monic, irreducible, nonconstant polynomials. Since $\F_p[t]$ is a PID, any finite $\F_p[t]$-module is a finite torsion module and hence
decomposes uniquely as a direct sum of its $f$-primary components (for $f\in S$).
The following proposition records the resulting classification of isomorphism classes.

\begin{prop}\label{prop: classification of finite module}
Every finite $\F_p[t]$-module $G$ admits a unique decomposition
\begin{equation}\label{eq: decomposition of f-submodules}
G\cong \bigoplus_{f\in S} G_f,
\end{equation}
where only finitely many summands are nonzero, and $G_f$ denotes the $f$-primary submodule of $G$.

Moreover, for each $f\in S$, there exists a unique partition
\[
\l^{(f)}=(\l^{(f)}_1,\l^{(f)}_2,\ldots)\in\Y
\]
such that
\[
G_f\cong \bigoplus_{j\ge 1}\F_p[t]/(f^{\l^{(f)}_j})
\]
as $\F_p[t]$-modules. Equivalently, if $f_1,\dots,f_r\in S$ are the distinct irreducible polynomials such that
$G_{f_i}\neq 0$, then
\[
G\cong \bigoplus_{i=1}^r\bigoplus_{j\ge 1}\F_p[t]/(f_i^{\l^{(f_i)}_j}),
\]
and the collection of partitions
\[
\l^{(f_1)},\dots,\l^{(f_r)}
\]
is uniquely determined by the $\F_p[t]$-module isomorphism class of $G$.
\end{prop}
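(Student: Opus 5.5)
The plan is to deduce the classification from the structure theorem for finitely generated modules over a PID, applied to $R=\F_p[t]$, and to get uniqueness from counting invariants that are visibly isomorphism-invariant.

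\emph{Existence.} A finite $\F_p[t]$-module $G$ is finitely generated, and it is torsion: multiplication by $t$ is an $\F_p$-linear endomorphism of the finite-dimensional space $G$, so its minimal polynomial $m(t)$ annihilates $G$. Factor $m=\prod_{i=1}^r f_i^{a_i}$ with distinct $f_i\in S$. Set $G_{f_i}=\{x\in G: f_i^{a_i}x=0\}$. By the Chinese remainder theorem there are polynomials $e_i$ with $e_i\equiv 1 \pmod{f_i^{a_i}}$ and $e_i\equiv 0 \pmod{f_j^{a_j}}$ for $j\ne i$. These act on $G$ as orthogonal idempotents summing to the identity, which gives $G=\bigoplus_i G_{f_i}$. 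For $f\notin\{f_1,\dots,f_r\}$, the element $f$ acts invertibly on $G$, so $G_f=0$. Each $G_{f_i}$ is a finitely generated torsion module over the local PID $\F_p[t]_{(f_i)}$, or equivalently a finite module over $\F_p[t]/(f_i^{a_i})$. The elementary divisor form of the structure theorem then gives $G_{f_i}\cong\bigoplus_j \F_p[t]/(f_i^{\lambda_j})$, and ordering the exponents decreasingly produces $\lambda^{(f_i)}\in\Y$.

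\emph{Uniqueness.} The submodule $G_f=\bigcup_k\ker(f^k)$ is defined intrinsically, so any isomorphism $G\cong G'$ carries $G_f$ onto $G'_f$. This gives uniqueness of the primary decomposition. For the partition, let $k_f=\F_p[t]/(f)$, a field with $p^{\deg f}$ elements. For $M=\bigoplus_j\F_p[t]/(f^{\lambda_j})$, compute
\[
\dim_{k_f}\bigl(f^{k}M/f^{k+1}M\bigr)=\#\{j:\lambda_j>k\}=\lambda'_{k+1},
\]
using $f^k(\F_p[t]/(f^{m}))/f^{k+1}(\cdots)\cong k_f$ if $m>k$ and $0$ otherwise. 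These dimensions are isomorphism invariants. They determine the conjugate partition $\lambda'$, and hence $\lambda$.

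\emph{Main point.} The only step needing care is applying the structure theorem in the primary form rather than the invariant-factor form. I would either cite it directly (e.g.\ from Dummit--Foote or Lang) or derive the primary form from the invariant-factor form by splitting each cyclic factor $\F_p[t]/(g)$ using the Chinese remainder theorem. Everything else is routine.
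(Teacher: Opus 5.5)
Your proposal is correct and takes the same route as the paper. The paper gives no separate proof: it just invokes the structure theorem for finitely generated torsion modules over the PID $\F_p[t]$. Your CRT-idempotent splitting and the isomorphism invariants $\dim_{k_f}(f^kM/f^{k+1}M)=\lambda'_{k+1}$ make explicit the standard existence and uniqueness steps that the paper leaves to that citation.
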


In general, for $f\in S$, we say that the finite $\F_p[t]$-module $G$ has \emph{type} $\l$ at $f$ if the $f$-primary component $G_f$ in the decomposition \eqref{eq: decomposition of f-submodules} is isomorphic to $\left(\F_p[t]/(f^{\l_1})\right)\bigoplus\cdots\bigoplus\left(\F_p[t]/(f^{\l_j})\right)$. The following proposition studies the group of automorphisms of $\F_p[t]$-modules and counts their cardinality.

\begin{prop}\label{prop: decomposition and explicit expression of Aut}
We have
\begin{equation}\label{eq: decomposition of automorphism}
\Aut\left(\bigoplus_{i=1}^k\bigoplus_{j\ge 1}(\F_p[t]/(f_i^{\l^{(f_i)}_j}))\right)=\prod_{i=1}^k\Aut_{f_i}(\lambda^{(f_i)}),
\end{equation}
where for all $1\le i\le k$, $\Aut_{f_i}(\lambda^{(f_i)})$ is the group of automorphisms of $\bigoplus_{j\ge 1}(\F_p[t]/(f_i^{\l^{(f_i)}_j}))$, and
\begin{equation}\label{eq: explicit expression of automorphism}
\#\Aut_{f_i}(\lambda^{(f_i)})=p^{\deg f_i\cdot(|\l^{(f_i)}|+2n(\l^{(f_i)}))}\prod_{j\ge 1}(p^{-\deg f_i};p^{-\deg f_i})_{m_j(\l^{(f_i)})}.
\end{equation}
Here, the notation $(a;q)_m:=(1-a)(1-aq)\cdots(1-aq^{m-1}),m\ge 0$ refers to the $q$-Pochhammer symbol with the convention that $(a;q)_0=1$.
\end{prop}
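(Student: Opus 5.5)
The statement has two parts: the splitting of the automorphism group over primary components, and the cardinality formula for each factor. I will handle them in that order, and for the formula I will reduce to the classical count over a discrete valuation ring.

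\textbf{Splitting.} Write $G=\bigoplus_i G_{f_i}$, where $G_{f_i}$ is the $f_i$-primary part. Any $\F_p[t]$-endomorphism $\phi$ of $G$ maps $G_{f_i}$ into $G_{f_i}$. Indeed, if $x\in G_{f_i}$ then $f_i^N x=0$ for some $N$, so $f_i^N\phi(x)=\phi(f_i^N x)=0$. Thus $\phi(x)$ is $f_i$-primary, and its components in the other primary summands vanish. Hence $\phi$ is block diagonal, and $\phi$ is invertible exactly when each block is invertible. This gives \eqref{eq: decomposition of automorphism}.

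\textbf{Reduction to a DVR.} Fix $f=f_i$ of degree $d$ and write $\l=\l^{(f_i)}$. The module $M=\bigoplus_j \F_p[t]/(f^{\l_j})$ is killed by $f^{\l_1}$, so it is a module over $\F_p[t]/(f^{\l_1})$. This ring is a quotient of the completion $R=\F_p[t]_{(f)}^{\wedge}$, which is a complete DVR with uniformizer $f$ and residue field $k=\F_p[t]/(f)$ of size $q=p^d$. The $\F_p[t]$-automorphisms of $M$ coincide with its $R$-automorphisms, and $M\cong\bigoplus_j R/(f^{\l_j})$. So it suffices to prove the classical formula for a finite module of type $\l$ over a DVR with residue field of size $q$:
\[
\#\Aut(M)=q^{|\l|+2n(\l)}\prod_{j\ge1}(q^{-1};q^{-1})_{m_j(\l)}.
\]
Substituting $q=p^{\deg f_i}$ then gives \eqref{eq: explicit expression of automorphism}.

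\textbf{Counting endomorphisms.} We have
\[
\#\mathrm{End}(M)=\prod_{j,k}\#\Hom\bigl(R/f^{\l_j},R/f^{\l_k}\bigr)=q^{\sum_{j,k}\min(\l_j,\l_k)}.
\]
Using $\sum_{j,k}\min(\l_j,\l_k)=\sum_i (\l'_i)^2=|\l|+2n(\l)$, this equals $q^{|\l|+2n(\l)}$.

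\textbf{Identifying the automorphisms.} An endomorphism is an automorphism iff it is surjective, since $M$ is finite. By Nakayama's lemma applied to the filtration by $f$-adic layers, this happens iff the induced maps on the graded pieces are invertible. Group the summands by part size $j$. Reducing modulo the radical of $\mathrm{End}(M)$, the endomorphism ring maps onto $\prod_j \Mat_{m_j(\l)}(k)$. A block of $\phi$ between summands of distinct sizes contributes only to the radical, because maps $R/f^a\to R/f^b$ with $a\neq b$ have image in $fR/f^b$ or factor through multiplication by $f$. Thus $\phi$ is invertible iff each diagonal image in $\Mat_{m_j}(k)$ lies in $\GL_{m_j}(k)$. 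Each fiber of the reduction map has the same size, so
\[
\#\Aut(M)=\#\mathrm{End}(M)\prod_j\frac{\#\GL_{m_j}(q)}{q^{m_j^2}}=\#\mathrm{End}(M)\prod_j(q^{-1};q^{-1})_{m_j}.
\]
Combined with the endomorphism count, this is the classical DVR formula above.

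The main obstacle is making this radical argument precise. In particular one must verify that the kernel of $\mathrm{End}(M)\to\prod_j\Mat_{m_j}(k)$ is a two-sided nilpotent ideal, so that invertibility is detected exactly by the reduction. If that becomes cumbersome, the fallback is to cite Macdonald's \emph{Symmetric Functions and Hall Polynomials}, Chapter~II, where this formula is stated for any finite module of type $\l$ over a complete DVR with residue field of size $q$.
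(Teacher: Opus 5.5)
Your proposal is correct and follows the paper's route. Endomorphisms preserve primary components, so the automorphism group splits. Each factor is then the automorphism group of a module of type $\l^{(f_i)}$ over a DVR with residue field $\F_p[t]/(f_i)$ of size $p^{\deg f_i}$. At that point the paper simply cites Macdonald (Chapter II.1), which is your fallback.

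Your optional self-contained count also works, but one justification needs repair. A block $R/(f^a)\to R/(f^b)$ with $a>b$ does not factor through multiplication by $f$; the projection $R/(f^2)\to R/(f)$ is a counterexample. What is true is that each block is multiplication by some $r$ with $v_f(r)\ge\max(0,b-a)$. For a kernel element, the blocks between summands of equal size also satisfy $v_f(r)\ge 1$.

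Consequently, along a composite of $N$ blocks the valuation is at least the number of equal-size steps plus the total size increase. Since the total decrease minus the total increase is less than $\l_1$, this is at least $(N-\l_1)/2$. Two special cases give what you need:
\begin{itemize}
\item A round trip $a\to b\to a$ with $a\neq b$ already has valuation at least $|a-b|\ge 1$, so $\mathrm{End}(M)\to\prod_a\Mat_{m_a}(k)$ is multiplicative.
\item Taking $N=3\l_1$ shows the kernel is nilpotent, because $f^{\l_1}M=0$.
\end{itemize}
This gives the conclusion you wanted, that $\phi$ is invertible iff its reduction is.
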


\begin{proof}
The decomposition \eqref{eq: decomposition of automorphism} is valid because there are no non-trivial maps between the summands corresponding to each $i$. Also, notice that $\F_p[t]/(f_i)$ is the finite field with $p^{\deg f_i}$ elements. Therefore, the explicit expression \eqref{eq: explicit expression of automorphism} is given in \cite[Chapter 2.1]{macdonald1998symmetric}.
\end{proof}





\section{Proof of the main results}\label{sec: Proof of the main results}

We begin with the following standard interpretation of rational canonical form in terms of $\F_p[t]$-modules; see, for instance, \cite[Section 7]{hoffmann1971linear}.

\begin{thm}\label{thm: rational canonical and smith normal form}
Let $A_n\in\Mat_n(\F_p)$, and $f\in S$. Then the type of $\Cok(tI_n-A_n)$ at $f$ is $\l^{(f)}=\l^{(f)}(A_n)$. Here, $\l^{(f)}(A_n)$ is the partition associated to the rational canonical form, as defined in \Cref{sec: Introduction}.
\end{thm}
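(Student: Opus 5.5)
The plan is to identify $\Cok_{\F_p[t]}(tI_n-A_n)$ with $\F_p^n$ regarded as an $\F_p[t]$-module via $A_n$, and then read off the primary decomposition from the rational canonical form \eqref{eq:rational_canonical_form}.

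\emph{Step 1: the cokernel is $\F_p^n$ with $t$ acting by $A_n$.} Write $V_{A}$ for $\F_p^n$ with $t\cdot v := A_n v$ (using the row/column convention so that the companion matrix \eqref{eq:companion_matrix} acts as multiplication by $t$ on $\F_p[t]/(f)$; if needed, replace $A_n$ by its transpose, which is conjugate to it and so has the same rational canonical form). Define the $\F_p[t]$-linear map
$$\F_p[t]^n\to V_A,\qquad (g_1,\dots,g_n)\mapsto \sum_i g_i(A_n)e_i .$$
It is surjective because the constants already hit every $e_i$. Its kernel contains the columns of $tI_n-A_n$, since $(tI_n-A_n)e_i\mapsto A_ne_i-A_ne_i=0$. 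The quotient $\F_p[t]^n/(tI_n-A_n)\F_p[t]^n$ is spanned over $\F_p$ by the images of $e_1,\dots,e_n$, because modulo the image one may replace $te_i$ by $A_ne_i$ and induct on degree. It therefore has $\F_p$-dimension at most $n=\dim V_A$. Hence the induced surjection $\Cok(tI_n-A_n)\to V_A$ is an isomorphism. Alternatively, note that $\det(tI_n-A_n)$ has degree $n$, so the cokernel has dimension exactly $n$ by Smith normal form.

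\emph{Step 2: transport along conjugation.} Conjugating $A_n$ by $P\in\GL_n(\F_p)$ gives an isomorphic module $V_{P^{-1}A_nP}\cong V_{A_n}$ via $v\mapsto Pv$. A block-diagonal matrix gives a direct sum of modules. So by \eqref{eq:rational_canonical_form} and \eqref{eq:primary_rational_block},
$$V_{A_n}\cong\bigoplus_{i}\bigoplus_{\ell} V_{C(F_i^{\lambda^{(F_i)}_\ell})}.$$

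\emph{Step 3: the companion module is cyclic.} For monic $f$ of degree $r$, $V_{C(f)}\cong\F_p[t]/(f)$. In the basis $1,t,\dots,t^{r-1}$, multiplication by $t$ on $\F_p[t]/(f)$ has matrix $C(f)$ (up to the transpose convention fixed in Step 1). This step is a direct check.

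\emph{Step 4: conclude.} Combining the steps gives $\Cok(tI_n-A_n)\cong\bigoplus_{i,\ell}\F_p[t]/(F_i^{\lambda_\ell^{(F_i)}})$. Each summand with $i$ fixed is $F_i$-primary, so the $F_i$-primary component is exactly $\bigoplus_\ell \F_p[t]/(F_i^{\lambda^{(F_i)}_\ell})$. By the uniqueness part of \Cref{prop: classification of finite module}, the type at $f=F_i$ is $\lambda^{(F_i)}(A_n)$. For $f$ not dividing $\det(tI_n-A_n)$, the $f$-component is zero, which matches the convention that $\lambda^{(f)}$ is the zero partition.

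The only delicate point is the row-versus-column convention in Steps 1 and 3. The companion matrix in \eqref{eq:companion_matrix} has its coefficients in the last row, so $t$ acts by $C(f)$ on row vectors, or equivalently by $C(f)^T$ on column vectors. I would handle this by using the fact that $A$ and $A^T$ are conjugate over a field, or equivalently that $\Cok(tI-A)\cong\Cok(tI-A^T)$ because transposition preserves Smith normal form. Everything else is standard linear algebra over a PID.
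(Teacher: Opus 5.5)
Your proposal is correct and is essentially the paper's argument: reduce by conjugation to the rational canonical form, split the cokernel along the diagonal blocks, and identify each companion block with $\F_p[t]/(F_i^{\lambda_\ell})$. Beyond the paper, you spell out the identification $\Cok(tI_n-A_n)\cong\F_p^n$ (with $t$ acting by $A_n$) and the row/column convention for $C(f)$, which the paper uses implicitly; the convention issue is harmless in any case, since $C(f)$ and $C(f)^{T}$ are both cyclic with characteristic polynomial $f$.
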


\begin{proof}
Notice that the conjugation action of $\GL_n(\F_p)$ does not change the cokernel. Thus, there is no loss of generality in assuming that $A_n$ is already the rational canonical form, i.e.,
$$A_n=\begin{pmatrix}
R_1 & 0 & 0 & \cdots & 0 \\
0 & R_2 & 0 & \cdots & 0 \\
\vdots & \vdots & \vdots & \ddots & \vdots \\
0 & 0 & 0 & \cdots & R_k \\
\end{pmatrix}.$$
Here, each block matrix $R_i$ corresponds to different polynomials in $S$. Denote $\l^{(f)}=(\l_1,\ldots,\l_j)$. We furthermore assume that $R_1$ corresponds to $f$, so that the type of $\Cok(tI
_n-A_n)$ at $f$ is the same as $\Cok(tI_{|\l|\deg f}-R_1)$, and
$$R_1=\begin{pmatrix}
C(f^{\lambda_1}) & 0 & 0 & \cdots & 0 \\
0 & C(f^{\lambda_2}) & 0 & \cdots & 0 \\
\vdots & \vdots & \vdots & \ddots & \vdots \\
0 & 0 & 0 & \cdots & C(f^{\lambda_j}) \\
\end{pmatrix}.$$
In this case, we have
$$\Cok(tI_{|\l|\deg f}-R_1)=\bigoplus_{i=1}^j\Cok(tI_{\l_i\deg f}-C(f^{\l_i}))=\bigoplus_{i=1}^j\left(\F_p[t]/(f^{\lambda_i})\right),$$
which ends the proof.
\end{proof}

The following lemma is a necessary preparation for our proof of \Cref{thm:main}.

\begin{lemma}\label{lem:ext-hom-correct}
Let $f\in S$, let $k\ge 1$, and set
\[
R_f:=\F_p[t]/(f^k), \qquad K:=\F_p[t]/(f).
\]
Let $M$ be a finite $R_f$-module whose type at $f$ has largest part at most $k-1$.
Equivalently, as an $R_f$-module,
\[
M\cong \bigoplus_{j=1}^m \F_p[t]/(f^{\lambda_j})
\qquad\text{with}\qquad 1\le \lambda_j\le k-1.
\]
Then
\[
\#\Ext^1_{R_f}(M,K)=\#\Hom_{R_f}(M,K).
\]
More precisely, both groups are $K$-vector spaces of dimension $m$.
\end{lemma}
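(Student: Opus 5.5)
Both $\Hom$ and $\Ext^1$ are additive in the first variable, so the plan is to reduce to a single cyclic module $M=\F_p[t]/(f^{\lambda})$ with $1\le\lambda\le k-1$. For this cyclic module we will show that each group is a one-dimensional $K$-vector space; summing over the $m$ summands then gives dimension $m$ for both, and in particular equal cardinalities $p^{m\deg f}$.

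Write $R:=R_f$ and $\pi:=f$, viewed in $R$. Then $R$ is a local principal ideal ring whose maximal ideal $(\pi)$ satisfies $\pi^k=0$, and whose residue field is $K$, a field with $p^{\deg f}$ elements.

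For the $\Hom$ group: a homomorphism $R/(\pi^\lambda)\to K$ is determined by the image of $1$, and that image must be killed by $\pi^\lambda$. Since $\pi$ acts by zero on $K$, every element of $K$ qualifies. Hence $\Hom_R(M,K)\cong K$.

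For the $\Ext^1$ group we use the standard periodic free resolution of $M$ over $R$:
\[
\cdots\to R\xrightarrow{\pi^{k-\lambda}}R\xrightarrow{\pi^{\lambda}}R\to M\to 0.
\]
This is exact because in $R$ the annihilator of $\pi^\lambda$ is $(\pi^{k-\lambda})$ and the annihilator of $\pi^{k-\lambda}$ is $(\pi^{\lambda})$. Both facts use unique factorization in $\F_p[t]$, namely that $f^k$ divides $f^{\lambda}g$ exactly when $f^{k-\lambda}$ divides $g$. Applying $\Hom_R(-,K)$ and identifying $\Hom_R(R,K)=K$, the induced maps become multiplication by $\pi^{\lambda}$ and by $\pi^{k-\lambda}$ on $K$. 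Because $1\le\lambda\le k-1$, both exponents are at least $1$, so both maps are zero. The complex is therefore $K\xrightarrow{0}K\xrightarrow{0}K\xrightarrow{0}\cdots$, and its cohomology in degree $1$ is $K$. This gives $\Ext^1_R(M,K)\cong K$, one-dimensional over $K$.

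The only real point to watch is the hypothesis $\lambda\le k-1$. If $\lambda=k$, then $M=R$ is free and $\Ext^1$ vanishes; this shows why the condition on the largest part is needed. With the hypothesis in place, the exactness of the two-periodic resolution is the one step that needs a short check, and it follows from the annihilator computation above. The $K$-vector space structures on both groups come from the action of $R$ on $K$, which factors through $R/(\pi)=K$.
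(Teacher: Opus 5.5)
Your proposal is correct and follows the paper's proof essentially verbatim: reduce to cyclic summands by additivity, use the $2$-periodic free resolution of $R_f/(f^\lambda)$, and observe that every differential vanishes after applying $\Hom_{R_f}(-,K)$ because $1\le\lambda\le k-1$. Your annihilator check for exactness of the resolution and your direct computation of the $\Hom$ group are small details the paper leaves implicit.
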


\begin{proof}
Since both $\Hom_{R_f}(-,K)$ and $\Ext^1_{R_f}(-,K)$ commute with finite direct sums in the first variable, it suffices to treat the cyclic module
\[
M_\lambda:=R_f/(f^\lambda), \qquad 1\le \lambda\le k-1.
\]

For such $\lambda$, $M_\lambda$ admits the standard $2$-periodic free resolution
\[
\cdots \xrightarrow{\cdot f^{k-\lambda}} R_f
\xrightarrow{\cdot f^\lambda} R_f
\xrightarrow{\cdot f^{k-\lambda}} R_f
\xrightarrow{\cdot f^\lambda} R_f
\longrightarrow M_\lambda \longrightarrow 0.
\]
Applying $\Hom_{R_f}(-,K)$, we obtain the cochain complex
\[
0\to \Hom_{R_f}(R_f,K)\xrightarrow{0}\Hom_{R_f}(R_f,K)\xrightarrow{0}\Hom_{R_f}(R_f,K)\xrightarrow{0}\cdots
\]
because $f$ acts trivially on $K=R_f/(f)$, hence multiplication by $f^\lambda$ and by $f^{k-\lambda}$ induce the zero map on $\Hom_{R_f}(R_f,K)\cong K$.

Therefore,
\[
\Hom_{R_f}(M_\lambda,K)\cong K,
\qquad
\Ext^1_{R_f}(M_\lambda,K)\cong K.
\]
Now if
\[
M\cong \bigoplus_{j=1}^m R_f/(f^{\lambda_j}),
\qquad 1\le \lambda_j\le k-1,
\]
then
\[
\Hom_{R_f}(M,K)\cong K^m,
\qquad
\Ext^1_{R_f}(M,K)\cong K^m,
\]
and hence
\[
\#\Ext^1_{R_f}(M,K)=\#\Hom_{R_f}(M,K)=|K|^m.
\]
This proves the lemma.
\end{proof}

\subsection{Sharp function-field surjection moments}

Set
$$
R:=\F_p[t],
\qquad
X_n:=\Cok_R(tI_n-A_n).
$$
We first record an elementary observation which allows us to compare
surjections from $X_n$ with the ordinary finite-field surjection
moments studied by Lee.

\begin{lemma}
    \label{lem:underlying-vector-space-surjectivity}
    Let $G$ be a finite $R$-module, and let
    $$
    T:G\longrightarrow G
    $$
    denote multiplication by $t$. Suppose that
    $g_1,\ldots,g_n\in G$ generate $G$ as an $R$-module and satisfy
    \begin{equation}
        \label{eq:factor-condition}
        Tg_j
        =
        \sum_{i=1}^n A_{ij}g_i,
        \qquad
        1\leq j\leq n,
    \end{equation}
    for some $A=(A_{ij})\in\Mat_n(\F_p)$. Then
    $$
    \Span_{\F_p}\{g_1,\ldots,g_n\}=G.
    $$
\end{lemma}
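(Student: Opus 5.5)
The plan is to show that the $\F_p$-span $V:=\Span_{\F_p}\{g_1,\ldots,g_n\}$ is stable under $T$. Once this is known, $V$ is an $R$-submodule of $G$ containing the generators, and therefore $V=G$.

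The first step is $T$-stability. For each $j$, condition \eqref{eq:factor-condition} expresses $Tg_j=\sum_i A_{ij}g_i$ with $A_{ij}\in\F_p$, so $Tg_j\in V$. Since $T$ is $\F_p$-linear, for an arbitrary element $v=\sum_j c_jg_j\in V$ with $c_j\in\F_p$ we get $Tv=\sum_j c_jTg_j\in V$. Thus $TV\subseteq V$.

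The second step is closure under all of $R$. By induction on $k$ we get $T^kV\subseteq V$ for all $k\ge 0$. Since every $h\in R=\F_p[t]$ is an $\F_p$-linear combination of powers of $t$, and $h$ acts on $G$ as the same combination of powers of $T$, it follows that $hV\subseteq V$. Hence $V$ is an $R$-submodule of $G$.

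The final step is generation. $V$ contains $g_1,\ldots,g_n$, and the $R$-submodule generated by these elements is $G$ by hypothesis, so $G\subseteq V\subseteq G$. This gives $V=G$. There is no genuine obstacle; the only point to make explicit is that being an $R$-submodule is equivalent to being an $\F_p$-subspace stable under $T$, which is exactly what the first two steps establish.
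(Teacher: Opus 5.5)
Your proposal is correct and follows the paper's proof: the paper also uses the relation $Tg_j=\sum_i A_{ij}g_i$ to get $TV\subseteq V$ for $V=\Span_{\F_p}\{g_1,\ldots,g_n\}$, concludes that $V$ is an $R$-submodule containing the generators, and hence that $V=G$. Your extra step, deducing closure under all of $\F_p[t]$ from closure under powers of $T$, only makes explicit what the paper leaves implicit.
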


\begin{proof}
Let
$$
V:=\Span_{\F_p}\{g_1,\ldots,g_n\}.
$$
By \eqref{eq:factor-condition}, we have $Tg_j\in V$ for every
$1\leq j\leq n$, and hence
$$
TV\subseteq V.
$$
Thus $V$ is already an $R$-submodule of $G$. Since
$g_1,\ldots,g_n$ generate $G$ as an $R$-module, it follows that
$V=G$.
\end{proof}

We next isolate the finite-field estimate that will be used below.
This is the point at which the sharp-threshold analysis of
Lee \cite{lee2025sharp} enters.

\begin{lemma}[Absolute Fourier estimate]
    \label{lem:absolute-Fourier-estimate}
    Fix a positive integer $m$ and a constant $c>1$, and put
    $$
    \beta_n:=\frac{c\log n}{n}.
    $$
    Let $V$ be an $m$-dimensional $\F_p$-vector space, and let
    $\mu_{ij}$, $1\leq i,j\leq n$, be arbitrary
    $\beta_n$-balanced probability measures on $\F_p$. Fix a nontrivial
    additive character
    $$
    \psi:\F_p\longrightarrow\C^\times,
    $$
    and write
    $$
    \widehat{\mu}_{ij}(u)
    :=
    \sum_{a\in\F_p}\mu_{ij}(a)\psi(ua),
    \qquad
    u\in\F_p.
    $$
    Then
    \begin{align}
        \label{eq:absolute-Fourier-majorant}
        &p^{-mn}
        \sum_{\substack{g_1,\ldots,g_n\in V\\
        \Span_{\F_p}\{g_1,\ldots,g_n\}=V}}
        \ \sum_{\substack{h_1,\ldots,h_n\in V^\vee\\
        (h_1,\ldots,h_n)\neq(0,\ldots,0)}}
        \prod_{i,j=1}^n
        \left|
        \widehat{\mu}_{ij}\bigl(h_j(g_i)\bigr)
        \right|
        =
        o(1),
    \end{align}
    where the estimate is uniform over the choices of
    $\mu_{ij}$.
\end{lemma}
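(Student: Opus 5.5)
Think of the pair $(g,h)$ as two linear maps: $g\in\Hom(\F_p^n,V)$ with $g(e_i)=g_i$, which is surjective, and $h\in\Hom(V,\F_p^n)$ with $h(v)=(h_j(v))_j$, which is nonzero. The argument of each character is then an entry of the $n\times n$ matrix $M=(h_j(g_i))_{i,j}$, and $M$ has rank at most $m$.

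The basic pointwise input is the standard bound for $\beta$-balanced measures: for every $u\neq 0$,
$$|\widehat\mu_{ij}(u)|\le 1-c_p\beta_n\le \exp(-c_p\beta_n),$$
with $c_p>0$ depending only on $p$. This holds uniformly over all $\beta_n$-balanced $\mu_{ij}$. Consequently
$$\prod_{i,j}|\widehat\mu_{ij}(M_{ij})|\le \exp\bigl(-c_p\beta_n N(M)\bigr),$$
where $N(M)$ is the number of nonzero entries of $M$. Once this is in place, the whole problem is counting: it suffices to bound
$$p^{-mn}\sum_{(g,h)}\exp\bigl(-c_p\beta_n N\bigr).$$

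This is exactly the quantity controlled in Lee \cite{lee2025sharp}, the sum over surjections to $V$ and nonzero dual vectors in the sparse cokernel moment computation. The plan is to reproduce that counting, with the sharp constant, in three steps.

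\textbf{Step 1: reduce to rows.} Write $N=\sum_j N_j$, where $N_j=\#\{i: h_j(g_i)\neq 0\}$. Fix $g$. For a fixed nonzero functional $\phi\in V^\vee$, let $z(\phi)=\#\{i:\phi(g_i)=0\}$. The sum over $h$ factorizes over $j$, giving
$$\Bigl(1+\sum_{\phi\neq 0}e^{-c_p\beta_n(n-z(\phi))}\Bigr)^n-1.$$

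\textbf{Step 2: split $g$ by how degenerate it is.} Following Odlyzko/Nguyen--Wood, call $g$ a \emph{code} of distance $\delta n$ if every nonzero $\phi$ has $n-z(\phi)\ge \delta n$.

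For codes, each term satisfies $e^{-c_p\beta_n\delta n}=n^{-c c_p\delta}$. With $\delta$ fixed this is still not small enough for the $n$-th power. The sharp version instead takes $\delta$ close to $1-1/p$, since a random $g$ has $z(\phi)\approx n/p$. The point that makes the constant $c>1$ the right one is to work with the exact quantity $\max_{u\ne0}|\widehat\mu(u)|$ in place of the exponential bound. A $\beta$-balanced measure has $\sum_{u\neq0}|\widehat\mu(u)|^2\le p(1-\beta)-1$, which controls averages over $\phi$. This is the delicate part of Lee's analysis.

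For non-codes, some $\phi$ vanishes on more than $(1-\delta)n$ of the $g_i$. Count these by choosing $\phi$ and the zero set: the number of such $g$, divided by $p^{mn}$, is at most $\binom{n}{\ell}p^{-\ell}$-type, and this exponential-in-$n$ saving beats the factor $(1+\dots)^n$ except when $\ell$ is close to $n$. Near $\ell\approx n$, $g$ is almost degenerate, and one stratifies by the sub-span $W$ spanned by most columns. The leftover coordinates have probability weight $(1-\beta_n)^{n}\approx n^{-c}$ each, and there are about $n$ of them. Hence the contribution is of order $n^{1-c}=o(1)$ exactly when $c>1$. This is where the threshold enters.

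\textbf{Main obstacle.} The hardest step is the intermediate regime: $g$ is neither a good code nor nearly degenerate, yet $h$ has many nonzero $h_j$. The terms $\exp(-c_p\beta_n N)$ summed over $p^{mn}$ choices of $h$ must still be $o(1)$. The approach I would try first is to take Lee's Fourier majorant lemmas, which bound precisely $\sum_h\prod|\widehat\mu|$ for $V=\F_p^m$, verbatim, as the paper indicates. I would check that those lemmas used only absolute values $|\widehat\mu_{ij}|$ and $\beta_n$-balancedness. Since the restriction to spanning $g$ only removes terms, the bound then transfers uniformly in the $\mu_{ij}$.
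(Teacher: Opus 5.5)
\textbf{There is a genuine gap: the pointwise bound in your Step~1 loses the sharp constant for $p\ge5$, and the reduction that makes citing Lee legitimate is missing.}

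Bounding each $|\widehat\mu_{ij}(u)|$, $u\neq0$, by $e^{-c_p\beta_n}$ and keeping only the count $N$ of nonzero entries throws away what the sharp constant depends on. For $p\ge5$ the counting sum you reduce to is not $o(1)$.

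\begin{enumerate}
\item A uniformly valid $c_p$ must satisfy $c_p\le 1-\cos(2\pi/p)+O(\beta_n)$. To see this, test $\mu=(1-\beta_n)\delta_0+\beta_n\delta_1$ at the $u$ with $\psi(u)=e^{2\pi i/p}$.
\item Take $m=1$ and the $n(p-1)$ spanning vectors $g$ with exactly one nonzero coordinate. Each contributes $p^{-n}\bigl[(1+(p-1)e^{-c_p\beta_n})^n-1\bigr]\ge n^{-c\frac{p-1}{p}c_p+o(1)}$. So your sum is at least $n^{1-c\frac{p-1}{p}c_p+o(1)}$.
\item This diverges for $c$ slightly above $1$, because $\frac{p-1}{p}\bigl(1-\cos\frac{2\pi}{p}\bigr)<1$ for $p\ge5$ (about $0.55$ for $p=5$). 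For $p=2,3$ this quantity equals $1$, which is why the problem is invisible there.
\item The same loss reappears in your code regime. Keeping only $N$ forces the requirement $c\,c_p\delta>1$, with $c_p<1$ when $p\ge5$.
\end{enumerate}

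For these single-coordinate $g$, the true majorant is $\prod_j\frac1p\sum_{a\in\F_p}|\widehat\mu_{i_0j}(a)|-p^{-n}$. What makes it $n^{-c}(1+o(1))$ is the \emph{averaged} inequality $\frac1p\sum_{a\in\F_p}|\widehat\mu(a)|\le1-\beta_n+\beta_n^2$, not any pointwise bound. Neither of your proposed repairs supplies it. Using $\max_{u\ne0}|\widehat\mu(u)|$ is the same pointwise quantity. Your Parseval bound $\sum_{u\ne0}|\widehat\mu(u)|^2\le p(1-\beta_n)-1$ gives only $\frac1p\sum_a|\widehat\mu(a)|\le\sqrt{1-\beta_n}$. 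That is $n^{-c/2}$ per such $g$, which would require $c>2$.

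Your fallback is to apply Lee's majorant lemmas verbatim after checking that they use only $|\widehat\mu_{ij}|$ and balancedness. This skips the one step the paper's proof actually carries out, and the computation above shows that balancedness used only through pointwise bounds is not enough. Lee's majorant is computed for the two-point entries $\P(\xi_{ij}=0)=1-\beta_n$, $\P(\xi_{ij}=t_{ij})=\beta_n$, uniformly in $(t_{ij})\in(\F_p^\times)^{n\times n}$. The missing idea is that these entries are the worst case:

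\begin{enumerate}
\item With all other measures fixed, the left-hand side is a nonnegative combination of the numbers $|\widehat\mu_{ij}(u)|$. These are absolute values of linear functionals of $\mu_{ij}$, so the left-hand side is convex in $\mu_{ij}$.
\item Maximizing one entry at a time over the polytope of $\beta_n$-balanced measures, you may assume every $\mu_{ij}$ is an extreme point. For $\beta_n<1/2$ these are exactly $(1-\beta_n)\delta_u+\beta_n\delta_v$ with $u\ne v$.
\item The identity $|(1-\beta_n)\psi(ux)+\beta_n\psi(vx)|=|1-\beta_n+\beta_n\psi((v-u)x)|$ lets you take $u=0$ and $t_{ij}=v-u$.
\end{enumerate}

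This reduction is what makes the appeal to Lee legitimate. The same convexity argument, evaluated at these extreme points, is also what gives the averaged inequality $\frac1p\sum_a|\widehat\mu(a)|\le1-\beta_n+\beta_n^2$ that your Step~1 lacks.
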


\begin{proof}
For all sufficiently large $n$, we have $\beta_n<1/2$. Consider the
compact convex set
$$
\mathcal C_{p,\beta_n}
:=
\left\{
(x_a)_{a\in\F_p}:
x_a\geq0,\ 
\sum_{a\in\F_p}x_a=1,\ 
x_a\leq1-\beta_n
\right\}.
$$
Probability measures on $\F_p$ which are $\beta_n$-balanced are
naturally identified with elements of $\mathcal C_{p,\beta_n}$.

Regard the left-hand side of
\eqref{eq:absolute-Fourier-majorant} as a function of the $n^2$
measures $\mu_{ij}$. If all measures except one are fixed, this
function is convex in the remaining measure, since each summand is a
nonnegative multiple of the absolute value of a linear functional of
that measure. We may therefore maximize successively over the
individual measures and reduce to the extreme points of
$\mathcal C_{p,\beta_n}$.

Since $\beta_n<1/2$, every such extreme point has the form
$$
(1-\beta_n)\delta_u+\beta_n\delta_v,
\qquad
u,v\in\F_p,\quad u\neq v.
$$
For $x\in\F_p$,
\begin{align}
    \left|
    (1-\beta_n)\psi(ux)
    +
    \beta_n\psi(vx)
    \right|
    &=
    \left|
    1-\beta_n
    +
    \beta_n\psi((v-u)x)
    \right|.
    \label{eq:translation-heavy-atom}
\end{align}
Thus, after taking absolute values, we may assume without loss of
generality that
\begin{equation}
    \label{eq:Lee-extreme-entry}
    \P(\xi_{ij}=0)=1-\beta_n,
    \qquad
    \P(\xi_{ij}=t_{ij})=\beta_n,
    \qquad
    t_{ij}\in\F_p^\times.
\end{equation}

Choose a basis of $V$ and identify $V$ and $V^\vee$ with
$\F_p^m$. Under this identification, the expression in
\eqref{eq:absolute-Fourier-majorant}, with the entry distributions
\eqref{eq:Lee-extreme-entry}, is precisely the nonnegative Fourier
majorant appearing in Lee's proof of the sharp finite-field
surjection-moment theorem. The estimates in
\cite[Lemma~2.1 and Theorems~2.2--2.4]{lee2025sharp} show that this
quantity is $o(1)$, uniformly over all arrays
$$
(t_{ij})_{1\leq i,j\leq n}
\in
(\F_p^\times)^{[n]\times[n]}.
$$
This proves the lemma.
\end{proof}

We can now establish the function-field analogue of the surjection moment theorem of Lee \cite{lee2025sharp}.

\begin{thm}[Sharp function-field surjection moments]
    \label{thm:function-field-moments}
    Let $(\alpha_n)_{n\geq1}$ satisfy
    \begin{equation}
        \label{eq:moment-threshold}
        \liminf_{n\to\infty}
        \frac{n\alpha_n}{\log n}>1,
    \end{equation}
    and let $A_n\in\Mat_n(\F_p)$ be an $\alpha_n$-balanced random
    matrix. Then, for every fixed finite $\F_p[t]$-module $G$,
    \begin{equation}
        \label{eq:function-field-moment-one}
        \lim_{n\to\infty}
        \E\#
        \Sur_{\F_p[t]}(X_n,G)
        =
        1.
    \end{equation}
\end{thm}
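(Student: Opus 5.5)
We follow the standard surjection-moment computation, transported to $R=\F_p[t]$. Write $m:=\dim_{\F_p}G$. The module $X_n=\Cok_R(tI_n-A_n)$ is the quotient of $R^n$ (basis $e_1,\dots,e_n$) by the columns of $tI_n-A_n$. An $R$-homomorphism $X_n\to G$ is therefore the same as a tuple $(g_1,\dots,g_n)\in G^n$, with $g_j$ the image of $e_j$, satisfying the relations \eqref{eq:factor-condition}, namely $Tg_j=\sum_i (A_n)_{ij}g_i$ for all $j$. Such a homomorphism is surjective exactly when the $g_j$ generate $G$ over $R$. By \Cref{lem:underlying-vector-space-surjectivity}, under the relations this is equivalent to $\Span_{\F_p}\{g_j\}=G$. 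Hence
$$\E\#\Sur_R(X_n,G)=\sum_{\substack{g\in G^n\\ \Span_{\F_p}g=G}}\P\Bigl(Tg_j=\textstyle\sum_i (A_n)_{ij}g_i\ \forall j\Bigr).$$
Each constraint is an equation in $G^n$, which is an $mn$-dimensional $\F_p$-space. We detect it with characters. Identify the dual of $G$ with $G^\vee=\Hom_{\F_p}(G,\F_p)$, fix a nontrivial $\psi$, and write the indicator as $p^{-mn}\sum_{h\in (G^\vee)^n}\psi\bigl(\sum_j h_j(Tg_j-\sum_i A_{ij}g_i)\bigr)$. Taking expectations, using independence of the entries, gives
$$p^{-mn}\sum_{g}\sum_{h}\psi\Bigl(\sum_j h_j(Tg_j)\Bigr)\prod_{i,j}\widehat{\mu}_{ij}\bigl(-h_j(g_i)\bigr).$$

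\textbf{Main term.} The term $h=0$ contributes $p^{-mn}\#\{g\in G^n:\Span_{\F_p}g=G\}=p^{-mn}\#\Sur_{\F_p}(\F_p^n,G)=\prod_{k=0}^{m-1}(1-p^{k-n})\to1$. Here $m$ is fixed because $G$ is fixed.

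\textbf{Error terms.} For $h\neq0$ we bound the summand by its absolute value. The phase $\psi(\sum_j h_j(Tg_j))$ has modulus one, so the $t$-structure drops out completely. Replacing $-h_j$ by $h_j$ is a bijection of the nonzero tuples, and it preserves absolute values since $|\widehat{\mu}(-u)|=|\widehat{\mu}(u)|$. The error is therefore at most exactly the majorant \eqref{eq:absolute-Fourier-majorant} with $V=G$ viewed as an $m$-dimensional $\F_p$-space.

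\textbf{Invoking \Cref{lem:absolute-Fourier-estimate}.} By \eqref{eq:moment-threshold}, choose $c>1$ with $n\alpha_n\ge c\log n$ for all large $n$. An $\alpha_n$-balanced measure is then $\beta_n$-balanced with $\beta_n=c\log n/n$, since balancedness is monotone in the parameter. \Cref{lem:absolute-Fourier-estimate} then makes the error $o(1)$ uniformly in the entry laws, and \eqref{eq:function-field-moment-one} follows.

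The step I expect to need the most care is the reduction just described. One must check that \Cref{lem:underlying-vector-space-surjectivity} converts $R$-surjectivity into $\F_p$-spanning, so that the sum over $g$ matches the index set of the lemma exactly. One must also check that discarding the $T$-twisted phase by absolute values loses nothing essential. The heavy analytic work is entirely contained in \Cref{lem:absolute-Fourier-estimate}, which has already been established.
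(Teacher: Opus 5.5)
Your proposal is correct and follows the paper's proof essentially step for step. Both use \Cref{lem:underlying-vector-space-surjectivity} to reduce $R$-surjectivity to $\F_p$-spanning, apply Fourier inversion over $(G^\vee)^n$, take the $h=0$ main term $\prod_{k=0}^{m-1}(1-p^{k-n})\to 1$, and discard the unimodular $T$-phase so that \Cref{lem:absolute-Fourier-estimate} (with $c>1$ chosen below the $\liminf$) kills the nonzero modes; the only difference is that you place the minus sign inside $\widehat{\mu}_{ij}$ rather than in the phase, which you correctly neutralize via $|\widehat{\mu}(-u)|=|\widehat{\mu}(u)|$.
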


\begin{proof}
Choose a constant $c>1$ such that
$$
c<
\liminf_{n\to\infty}
\frac{n\alpha_n}{\log n},
$$
and put $\beta_n:=\frac{c\log n}{n}$. For all sufficiently large $n$, we have
$\alpha_n\geq\beta_n$, and hence every entry of $A_n$ is also $\beta_n$-balanced.

Let
$m:=\dim_{\F_p}G$, and let
$$
T:G\longrightarrow G
$$
denote multiplication by $t$. An $R$-linear map
$$
F:R^n\longrightarrow G
$$
is determined by
$$
g_i:=F(e_i)\in G,
\qquad
1\leq i\leq n.
$$
Such a map factors through
$$
X_n=\Cok_R(tI_n-A_n)
$$
if and only if
$$
F(tI_n-A_n)=0,
$$
or equivalently,
\begin{equation}
    \label{eq:function-field-factor}
    Tg_j
    =
    \sum_{i=1}^n A_{ij}g_i,
    \qquad
    1\leq j\leq n.
\end{equation}
By \Cref{lem:underlying-vector-space-surjectivity}, whenever $F$ is
surjective and satisfies \eqref{eq:function-field-factor}, the
elements $g_1,\ldots,g_n$ span $G$ as an $\F_p$-vector space.
Conversely, $\F_p$-linear spanning certainly implies generation as an
$R$-module. Therefore
\begin{equation}
    \label{eq:function-field-moment-formula}
    \E\#
    \Sur_R(X_n,G)
    =
    \sum_{F\in\Sur_{\F_p}(\F_p^n,G)}
    \P(FA_n=TF).
\end{equation}

Choose a nontrivial additive character
$$
\psi:\F_p\longrightarrow\C^\times,
$$
and let
$$
G^\vee:=\Hom_{\F_p}(G,\F_p).
$$
For each $1\leq i,j\leq n$, let $\mu_{ij}$ denote the law of
$A_{ij}$ and set
$$
\widehat{\mu}_{ij}(u)
:=
\E\psi(uA_{ij}),
\qquad
u\in\F_p.
$$
Applying Fourier inversion to the $n$ equations in
\eqref{eq:function-field-factor}, we obtain
\begin{equation}
    \E\#\Sur_R(X_n,G)
    =
    p^{-mn}
    \sum_{\substack{g_1,\ldots,g_n\in G\\
    \Span_{\F_p}\{g_1,\ldots,g_n\}=G}}
    \sum_{h_1,\ldots,h_n\in G^\vee}
    \psi\left(
    -\sum_{j=1}^n h_j(Tg_j)
    \right)\times
    \prod_{i,j=1}^n
    \widehat{\mu}_{ij}
    \bigl(h_j(g_i)\bigr).
    \label{eq:function-field-Fourier}
\end{equation}

The contribution of the zero Fourier mode
$$
h_1=\cdots=h_n=0
$$
is given by
\begin{align}
    p^{-mn}\#
    \Sur_{\F_p}(\F_p^n,G)
    &=
    \prod_{a=0}^{m-1}
    \left(1-p^{a-n}\right)
    \longrightarrow 1.
    \label{eq:zero-Fourier-mode}
\end{align}

It remains to show that the contribution of all nonzero Fourier modes
tends to zero. The crucial point is that the multiplication-by-$t$
operator appears in \eqref{eq:function-field-Fourier} only through
the phase
$$
\psi\left(
-\sum_{j=1}^n h_j(Tg_j)
\right),
$$
whose absolute value is $1$. Hence the absolute value of the nonzero
contribution is bounded by
\begin{align}
    &p^{-mn}
    \sum_{\substack{g_1,\ldots,g_n\in G\\
    \Span_{\F_p}\{g_1,\ldots,g_n\}=G}}
    \sum_{\substack{h_1,\ldots,h_n\in G^\vee\\
    (h_1,\ldots,h_n)\neq(0,\ldots,0)}}
    \prod_{i,j=1}^n
    \left|
    \widehat{\mu}_{ij}
    \bigl(h_j(g_i)\bigr)
    \right|.
    \label{eq:function-field-error}
\end{align}
Since the $\mu_{ij}$ are $\beta_n$-balanced, this is $o(1)$ by
\Cref{lem:absolute-Fourier-estimate}. Combining this with
\eqref{eq:zero-Fourier-mode} proves
\eqref{eq:function-field-moment-one}.
\end{proof}

\begin{rmk}
    \label{rmk:Lee-transfer}
    The point of \Cref{thm:function-field-moments} is that no new
    sparse Fourier estimate is required for the $\F_p[t]$-module
    structure. Compared with the ordinary finite-field cokernel
    moment studied by Lee \cite{lee2025sharp}, multiplication by $t$
    introduces only the phase in
    \eqref{eq:function-field-Fourier}. After taking absolute values,
    this phase disappears, leaving exactly the same finite-field
    majorant.
\end{rmk}

\subsection{Proof of the limiting rational canonical form}

\begin{proof}[Proof of \Cref{thm:main}]
Fix partitions
$$
\l^{(1)},\ldots,\l^{(r)}\in\Y,
$$
corresponding respectively to the distinct irreducible polynomials
$$
F_{i_1},\ldots,F_{i_r}.
$$
Set
\begin{equation}
    \label{eq:target-module-M}
    M
    :=
    \bigoplus_{a=1}^r
    \bigoplus_{j\geq1}
    \F_p[t]/
    \left(
    F_{i_a}^{\l_j^{(a)}}
    \right).
\end{equation}
This is a finite $\F_p[t]$-module. By
\Cref{thm:function-field-moments}, for every fixed finite
$\F_p[t]$-module $G$,
\begin{equation}
    \label{eq:all-function-field-moments}
    \lim_{n\to\infty}
    \E\#
    \Sur_{\F_p[t]}(X_n,G)
    =
    1.
\end{equation}

Let
$$
Q:=\prod_{a=1}^r F_{i_a}^{\lambda_1^{(a)}+1}
\in\F_p[t],
\qquad\overline R:=\F_p[t]/(Q).
$$
Since $QM=0$, we may regard $M$ as a finite $\overline R$-module. By the choice of the exponent $\lambda^{(a)}_1+1$, we have $X_n/QX_n\cong M$
if and only if
$$
\lambda^{(F_{i_a})}(A_n)=\lambda^{(a)}
\qquad\text{for every }1\leq a\leq r.
$$
For every finite $R$-module $G$, every $\F_p[t]$-linear surjection $X_n\to G$ factors through $X_n/QX_n$. Hence
\Cref{thm:function-field-moments} gives
$$
\lim_{n\to\infty}
\E\#\Sur_R(X_n/QX_n,G)=1.
$$
Thus the robustness part of the moment theorem of Sawin-Wood
\cite[Theorem~1.6]{sawin2022moment}, together with their explicit
formula \cite[Lemma~6.3]{sawin2022moment}, determines the limiting distribution of $X_n/QX_n$. In particular,
\begin{align}
    \label{eq:from-moment-to-Sawin-Wood}
    &\lim_{n\to\infty}
    \P\left(
    \l^{(F_{i_a})}=\l^{(a)},
    \ \forall\,1\leq a\leq r
    \right)
    \notag\\
    &\qquad=
    \frac{1}{\#\Aut(M)}
    \prod_{a=1}^r
    \prod_{j\geq1}
    \left(
    1-
    \frac{
    \#\Ext_{\overline R}^1
    \left(
    M,\F_p[t]/(F_{i_a})
    \right)
    }{
    \#\Hom_{\F_p[t]}
    \left(
    M,\F_p[t]/(F_{i_a})
    \right)
    }
    p^{-j\deg F_{i_a}}
    \right).
\end{align}
Here, in the notation of
\cite[Lemma~6.3]{sawin2022moment}, we take $u=0$, replace $n$
there by $r$, and replace $N$ there by $M$.

For $1\leq a\leq r$, put
$$
K_a:=\F_p[t]/(F_{i_a}),
$$
$$
R_a
:=
\F_p[t]/
\left(
F_{i_a}^{\lambda_1^{(a)}+1}
\right),
$$
and
$$
M_a
:=
\bigoplus_{j\geq1}
\F_p[t]/
\left(
F_{i_a}^{\lambda_j^{(a)}}
\right).
$$
Since $K_a$ is supported only at the prime $F_{i_a}$,
the $\Hom$ and $\Ext^1$ groups in
\eqref{eq:from-moment-to-Sawin-Wood} depend only on the
$F_{i_a}$-primary summand. Hence
$$
\Hom_{\overline R}(M,K_a)
\cong
\Hom_{R_a}(M_a,K_a),
$$
and
$$
\Ext_{\overline R}^1(M,K_a)
\cong
\Ext_{R_a}^1(M_a,K_a).
$$
Every part of $\l^{(a)}$ is at most $\lambda_1^{(a)}$, so
\Cref{lem:ext-hom-correct} gives
$$
\#
\Ext_{R_a}^1(M_a,K_a)
=
\#
\Hom_{R_a}(M_a,K_a).
$$
Consequently,
\begin{equation}
    \label{eq:Ext-Hom-ratio-one}
    \#
    \Ext_{\overline R}^1
    \left(
    M,\F_p[t]/(F_{i_a})
    \right)
    =
    \#
    \Hom_{\F_p[t]}
    \left(
    M,\F_p[t]/(F_{i_a})
    \right)
\end{equation}
for every $1\leq a\leq r$.

On the other hand, by
\Cref{prop: decomposition and explicit expression of Aut},
\begin{align}
    \#\Aut(M)
    &=
    \prod_{a=1}^r
    \#
    \Aut
    \left(
    \bigoplus_{j\geq1}
    \F_p[t]/
    \left(
    F_{i_a}^{\l_j^{(a)}}
    \right)
    \right)
    \notag\\
    &=
    \prod_{a=1}^r
    \#
    \Aut_{F_{i_a}}
    \left(
    \l^{(a)}
    \right).
    \label{eq:Aut-M-product}
\end{align}
Substituting
\eqref{eq:Ext-Hom-ratio-one} and
\eqref{eq:Aut-M-product} into
\eqref{eq:from-moment-to-Sawin-Wood}, we obtain
\begin{align}
    &\lim_{n\to\infty}
    \P\left(
    \l^{(F_{i_a})}=\l^{(a)},
    \ \forall\,1\leq a\leq r
    \right)
    \notag\\
    &\qquad=
    \prod_{a=1}^r
    \left(
    \frac{1}{
    \#\Aut_{F_{i_a}}(\l^{(a)})
    }
    \prod_{j\geq1}
    \left(
    1-p^{-j\deg F_{i_a}}
    \right)
    \right)
    \notag\\
    &\qquad=
    \prod_{a=1}^r
    \mu_{F_{i_a}}
    \left(
    \l^{(a)}
    \right).
\end{align}
This proves \Cref{thm:main}.
\end{proof}

Finally, we deduce \Cref{cor:multiplicity}.

\begin{proof}[Proof of \Cref{cor:multiplicity}]
By \cite[Example~5.9(ii)]{cohen2006heuristics}, if
$\l\in\Y$ is distributed according to $\mu_{F_i}$, then
$$
\P(|\l|=j)
=
p^{-j\deg F_i}
\prod_{k\geq j+1}^{\infty}
\left(
1-p^{-k\deg F_i}
\right),
\qquad
j\in\Z_{\geq0}.
$$
The result now follows immediately from \Cref{thm:main}.
\end{proof}

\subsection{Critical sparse examples}
\label{subsec:critical_sparse}

We conclude the paper by discussing the sparse obstructions at the
critical scale. We first recall the degree-one construction of Lee
\cite{lee2025sharp}, which shows that the constant $1$ in
\Cref{thm:main} is sharp. We then extend the same mechanism to an
arbitrary irreducible polynomial of degree $d$ and show that the
resulting obstruction rules out any improvement of the constant
$1/d$ in \Cref{conj:degree_threshold}.

\phantomsection
\label{critical:lee}
We begin with Lee's construction. Put
$\alpha_n:=\frac{\log n}{n}$ and let $A_n\in\Mat_n(\F_p)$ have
independent entries satisfying
$$
\P(A_{ij}=0)=1-\alpha_n,
\qquad
\P(A_{ij}=1)=\alpha_n.
$$
For all sufficiently large $n$, this is an $\alpha_n$-balanced model.
A given row of $A_n$ is identically zero with probability
$$
(1-\alpha_n)^n=\frac{1+o(1)}{n}.
$$
Since the events that distinct rows are zero are independent, the
number of zero rows converges in distribution to
$\operatorname{Pois}(1)$. Every zero row contributes to the left
kernel of $A_n$, and hence to its corank.

This is the transpose of the zero-column construction used by Lee
\cite{lee2025sharp}. To see why the surviving zero rows genuinely
prevent convergence to the uniform limiting law, Lee compares the
corresponding tails. For every fixed $m$, the Poisson obstruction gives
a lower bound of order at least $1/m!$ for the probability of corank
at least $m$, whereas the limiting corank distribution in the uniform
model has tail of order $O(p^{-m^2})$. Taking $m$ sufficiently large
therefore rules out convergence to the uniform law at the critical
scale. In particular, the constant $1$ in \Cref{thm:main} cannot be
included.

We now generalize this construction from a linear polynomial to an
arbitrary irreducible polynomial.

\phantomsection
\label{critical:degree-d}
Let $F\in\F_p[t]$ be monic and irreducible of degree $d$, and set
$K:=\F_p[t]/(F)$. Let $C(F)\in\Mat_d(\F_p)$ be its companion matrix.
Since $C(F)$ has minimal polynomial $F$, there is an $\F_p$-linear
isomorphism
$$
\iota:\F_p^d\longrightarrow K
$$
such that
\begin{equation}
    \label{eq:companion-intertwining}
    \iota C(F)=T\iota,
\end{equation}
where $T$ denotes multiplication by $t$ on $K$.

Let $B_n\in\Mat_n(\F_p)$ be a deterministic block diagonal matrix
containing $\lfloor n/d\rfloor$ copies of $C(F)$, apart from a final
block of size less than $d$. Put
$$
\alpha_n:=\frac{\log n}{dn},
$$
and let the entries of $A_n$ be independent with
\begin{equation}
    \label{eq:degree-d-critical-model}
    \P(A_{ij}=(B_n)_{ij})=1-\alpha_n,
    \qquad
    \P(A_{ij}=(B_n)_{ij}+1)=\alpha_n.
\end{equation}
For all sufficiently large $n$, this is an $\alpha_n$-balanced model.

For each copy of $C(F)$ in $B_n$, consider the event that all entries
in the corresponding $d$ rows of $A_n$ remain equal to those of
$B_n$. Since these rows contain $dn$ entries, the probability of this
event is
$$
q_n:=(1-\alpha_n)^{dn}
=\frac{1+o(1)}{n}.
$$
The events associated with distinct blocks depend on disjoint sets of
entries and are therefore independent. Thus, if $N_n$ denotes the
number of untouched blocks, then
$$
N_n\sim\operatorname{Bin}
\left(
\left\lfloor\frac nd\right\rfloor,q_n
\right),
$$
and consequently
\begin{equation}
    \label{eq:degree-d-Poisson}
    N_n\Longrightarrow\operatorname{Pois}(1/d).
\end{equation}

Each untouched block gives an $\F_p$-linear surjection
$$
F_b:\F_p^n\longrightarrow K
$$
supported on the corresponding $d$ coordinates. By
\eqref{eq:companion-intertwining} and the fact that the corresponding
rows are untouched,
$$
F_bA_n=TF_b.
$$
Hence $F_b$ factors through an $R$-linear surjection
\begin{equation}
    \label{eq:critical-F-surjection}
    X_n\twoheadrightarrow K.
\end{equation}
The maps associated with distinct untouched blocks are linearly
independent over $K$. Therefore
\begin{equation}
    \label{eq:critical-Hom-rank}
    \dim_K\Hom_R(X_n,K)\geq N_n.
\end{equation}
It follows from \eqref{eq:degree-d-Poisson} that, for every fixed
$m\geq 1$,
\begin{equation}
    \label{eq:critical-Hom-tail}
    \liminf_{n\to\infty}
    \P\left(
    \dim_K\Hom_R(X_n,K)\geq m
    \right)
    \geq
    \P\left(\operatorname{Pois}(1/d)\geq m\right)
    \geq
    \frac{e^{-1/d}}{d^m m!}.
\end{equation}

We compare this with the Cohen--Lenstra measure $\mu_F$ appearing in
\Cref{thm:main}. For $\lambda\in\Y$, write
$$
M_\lambda:=
\bigoplus_{j\geq 1}\F_p[t]/(F^{\lambda_j}).
$$
The Cohen--Lenstra measure satisfies the surjection-moment identity
$$
\sum_{\lambda\in\Y}
\mu_F(\lambda)
\#\Sur_R(M_\lambda,K^m)
=1;
$$
see the moment formulas of Sawin and Wood
\cite{sawin2022moment}. If
$\dim_K\Hom_R(M_\lambda,K)\geq m$, then
$M_\lambda$ admits a surjection onto $K^m$, and in fact
$$
\#\Sur_R(M_\lambda,K^m)\geq\#\GL_m(K).
$$
Consequently,
\begin{equation}
    \label{eq:CL-Hom-tail}
    \mu_F\left(
    \lambda:
    \dim_K\Hom_R(M_\lambda,K)\geq m
    \right)
    \leq
    \frac{1}{\#\GL_m(K)}
    =
    O\left(p^{-dm^2}\right).
\end{equation}

For all sufficiently large fixed $m$, the lower bound in
\eqref{eq:critical-Hom-tail} is strictly larger than the upper bound
in \eqref{eq:CL-Hom-tail}, since
$$
\frac{e^{-1/d}}{d^m m!}
\gg
p^{-dm^2}.
$$
It follows that the $F$-primary partition of $A_n$ cannot converge to
$\mu_F$ at the critical scale
$$
\alpha_n=\frac{\log n}{dn}.
$$
Thus the constant $1/d$ in \Cref{conj:degree_threshold} cannot be
replaced by any smaller constant, and the strict inequality at the
critical value cannot in general be relaxed.

The degree-one case $d=1$ and $F=t$ recovers the obstruction of Lee,
while the construction above shows that the same mechanism persists
for every irreducible polynomial, with the critical scale determined
by its degree.

\bibliographystyle{plain}
\bibliography{references.bib}

\end{document}